\newcounter{dummy}
\def\thmref@flush{%
   \ifx\thmref@last\empty\else
      \ifthmref@comma, \thmref@finaltrue\fi \thmref@commatrue
      \thmref@last \ifx\thmref@stack\empty\else s\fi \thmref@num 0
      \let\do\thmref@one \thmref@stack
      \ifcase\thmref@num\or\space and\else\thmref@finaltrue, and\fi
      ~\ref{\thmref@head}\let\thmref@stack\empty\fi}
\def\thmref@one#1{\ifnum\thmref@num>0,\fi
   \space\ref{#1}\advance\thmref@num 1\relax}
\newcommand\myitem[1][]{\item[#1]\refstepcounter{dummy}\def\@currentlabel{#1}}
\newcommand{\E}{\mathbf{E}}
\renewcommand{\P}{\mathbf{P}}
\newcommand{\1}{\mathbf{1}}
\newcommand{\f}{\frac}
\DeclareMathOperator{\Ber}{Bernoulli}
\DeclareMathOperator{\Geo}{Geo}
\DeclarePairedDelimiter\abs{\lvert}{\rvert}%
\DeclarePairedDelimiter\ii{\llbracket}{\rrbracket}%
\newcommand{\tikzdashedmid}{%
  \mathrel{\tikz[]
    \draw[dotted, line cap=round] (0,-0ex) -- (0,1.6ex);%
  }%
}
\newcommand{\erase}{\mathsf{GreedyPath}}
\newcommand{\Z}{\mathbb{Z}}
\newcommand{\N}{\mathbb{N}}
\newcommand{\instr}{\mathsf{instr}}
\newcommand{\eosos}[1]{\mathcal{O}_{#1}}
\newcommand{\s}{\mathfrak{s}}
\newcommand{\critical}{\rho}
\newcommand{\lpcritical}{\rho_*}
\newcommand{\rt}[2]{\mathcal{R}_{#2}(#1)}
\newcommand{\up}[2]{\mathcal{U}_{#2}(#1)}
\newcommand{\down}[2]{\mathcal{D}_{#2}(#1)}
\newcommand{\lt}[2]{\mathcal L_{#2}(#1)}
\newcommand{\emax}{e_{\max}}
\DeclareSymbolFont{stixletters}{LS1}{stix}{m}{it}
\DeclareMathAccent{\cev}{\mathord}{stixletters}{"91}
\DeclareMathAccent{\vec}{\mathord}{stixletters}{"92}
\DeclareMathAccent{\vecev}{\mathord}{stixletters}{"95}
\newcommand{\greedy}[1][k]{\text{$k$-\erase}}
\newcommand{\crist}[1][\@nil]{%
\def\tmp{#1}%
   \ifx\tmp\@nnil
       \lpcritical
    \else
       \lpcritical^{(#1)}
    \fi}
\newcommand{\Left}{\ensuremath{\mathtt{left}}}
\newcommand{\Right}{\ensuremath{\mathtt{right}}}
\newcommand{\Up}{\ensuremath{\mathtt{up}}}
\newcommand{\Down}{\ensuremath{\mathtt{down}}}
\newcommand{\Sleep}{\ensuremath{\mathtt{sleep}}}
\newtheorem{thm}{Theorem}[section]
\newtheorem{lemma}[thm]{Lemma}
\newtheorem{prop}[thm]{Proposition}
\newtheorem*{conjecture*}{Density Conjecture}
\theoremstyle{remark}
\theoremstyle{definition}
\newtheorem{define}[thm]{Definition}
\newcommand{\critcomb}[1]{\critical_{\mathtt{COMB}}(#1)}
\newcommand{\critComb}{\critical_{\mathtt{COMB}}}
\newcommand{\critinterval}{\critical_{\mathtt{INTERVAL}}}
\newcommand{\critTeeth}[1]{\critical_{\mathtt{TEETH}}\left(#1\right)}
\newcommand{\critSpine}[1]{\critical_{\mathtt{SPINE}}\left(#1\right)}
\newcommand{\critstarc}{\critical^*_{\mathtt{COMB}}}
\newcommand{\critstari}{\critical^*_{\mathtt{INTERVAL}}}
\newcommand{\comb}[1]{\mathfrak{C}_{#1}}
\newcommand{\combinterior}[1]{\comb{#1}^o}
\newcommand{\greedyinterval}[2]{\rho_{\mathtt{INTERVAL}}^{(#1)}\left(#2\right)}
\newcommand{\greedyInterval}[1]{\rho_{\mathtt{INTERVAL}}^{(#1)}}
\newcommand{\greedyComb}[1]{\rho_{\mathtt{COMB}}^{(#1)}}
\newcommand{\shape}[1]{\mathtt{shape}_{#1}}
\newcommand{\In}[2]{\mathtt{In}_{#1}(#2)}
\newcommand{\Out}[2]{\mathtt{Out}_{#1}(#2)}
\newcommand{\law}{\nu}
\newcommand{\lawI}{\nu_{\mathtt{I}}}
\newcommand{\lawC}{\nu_{\mathtt{C}}}
\newcommand{\lawCthreeD}{\nu_{\mathtt{Cd3}}}
\newcommand{\m}{\mathfrak{m}}
\newcommand{\filloffsetone}[2][]{
  \begin{scope}[shift={#2}]
    \fill[#1]   (0,0) rectangle +(1,1);
    \fill[#1](1,0) rectangle +(1,1);
    \fill[#1](2,0) rectangle +(1,1);
    \fill[#1] (1,1) rectangle +(1,1);
    \fill[#1](1,2) rectangle +(1,1);
    \fill[#1]  (0,2) rectangle +(1,1);
  \end{scope}
}
\newcommand{\fillcellwithoffset}[2]{
\begin{scope}[shift={#2}]
        \fill[#1] (0,0) rectangle ++(1,1);
    \end{scope}
}
\newcommand{\fillcellwithoffsetdiag}[2]{
    \begin{scope}[shift={#2}]
        \fill[pattern={Lines[angle=45, distance=4pt, line width=2pt]}, pattern color=#1] (0,0) rectangle ++(1,1);%
    \end{scope}
}
\newcommand{\fillcellwithoffsetnegdiag}[2]{
    \begin{scope}[shift={#2}]
        \fill[pattern={Lines[angle=135, distance=4pt, line width=2pt]}, pattern color=#1] (0,0) rectangle ++(1,1);%
    \end{scope}
}
\title{Activated random walk on the comb}
\author{Matthew Junge} \address{Matthew Junge, Department of Mathematics, Baruch College, City University of New York}
	\email{\texttt{matthew.junge@baruch.cuny.edu}}
\author{Josh Meisel}\address{Josh Meisel, Department of Mathematics, Graduate Center, City University of New York}
	\email{\texttt{jmeisel@gradcenter.cuny.edu}}
\author{Aldo Morelli}\address{Aldo Morelli, Department of Mathematics, University of New Mexico}
	\email{\texttt{amorelli@unm.edu}}
\begin{document}
\thanks{Junge was partially supported by NSF DMS Grant 2238272. Morelli was partially supported by NSF DMS Grants 2238272 and 2349366. Part of this research was conducted during the 2025 Baruch College Discrete Mathematics NSF Site REU}

\begin{abstract}
    The density conjecture for activated random walk on the interval was recently resolved using a new tool called layer percolation. As a step towards understanding how layer percolation extends to activated random walk on more complex graphs, we develop its analog for the comb graph and use it to prove bounds on the critical density. Additionally, we provide simulation evidence suggesting that the comb has different critical densities on its spine and teeth, both of which are smaller than the critical density for the interval. 
\end{abstract}

\maketitle

\section{Introduction}
Self-organized criticality (SOC) is an influential physics theory developed in the late 1980s to explain how a vast range of complex systems in nature maintain critical-like states \cite{BakTangWiesenfeld87}. Despite several candidate mathematical ``sandpile" models of SOC being proposed over the past four decades, rigorously establishing the predictions made by SOC in any of them has proven difficult \cite{watkins2016twentyfive, fey2010driving}. 
Activated random walk (ARW) is one such model that has begun to distinguish itself as particularly promising \cite{dickman2010activated, rolla2020activated, levine2023universality}. In 2024, Hoffman, Johnson, and Junge introduced a new tool for studying ARW in dimension one that they called {layer percolation}. It has since been used to establish several universality properties desirable in a model of SOC \cite{hoffman2024density, hoffman2024hockey, hoffman2025cutoff}. 

Understanding ARW, and by extension SOC, in more complicated geometries is an important line of inquiry. The main point of this article is to explore the nature of extending the layer percolation process introduced in \cite{hoffman2024density} beyond the interval.
We do so by generalizing it to include arbitrary shapes as its building blocks. This more general picture allows us to describe and study ARW on a comb graph, i.e.\ an interval with each of its vertices also connected to a distinct tooth vertex. Even this slight change introduces significant complexity. 

\subsection{Model definition}

ARW is a particle system on a graph consisting of \textit{active} and \textit{sleeping} particles. Active particles jump as rate-1 continuous-time random walks. Isolated active particles fall asleep at rate $\lambda>0$. A sleeping particle is dormant but becomes active the moment an active particle jumps onto its site. The graph may also contain special sites called \textit{sinks}, which kill particles that jump to them. 

The \textit{driven-dissipative} model of ARW consists of adding particles one at a time and then \emph{stabilizing} to a \emph{stable} configuration with only empty sites and isolated sleeping particles. Each new particle addition and subsequent stabilization is a time step of a Markov chain on the set of stable configurations. So long as $V$ is connected and contains a sink, the Markov chain has a unique stationary distribution that does not depend on the particular driving sequence \cite{levine2021exact}. In fact, the stationary distribution can be exactly sampled by placing an active particle at each site and then stabilizing. 

In \cite{hoffman2024density} it was proven that
\begin{align}
    \critinterval(\lambda) := \lim_{n\to\infty} \frac{\mathbf{E}[S'_n]}{n}, \label{eq:cI}
\end{align}
exists where $S'_n$ is the number of sleeping particles of a configuration sampled from the stationary distribution of driven-dissipative ARW on the interval $\ii{0,n+1} \subset \Z$, with the endpoints selected as sinks. This was done by developing a $(2+1)$-dimensional directed percolation model that they called {layer percolation}, which we will refer to as \textit{interval percolation}. %
Loosely speaking, this process tracks the number of instructions used in all balanced flows of particles when allowed to selectively wake sleeping particles without requiring a visit from an active particle. A superadditivity property inherent to interval percolation makes it easy to deduce that it has a critical value $\critstari$, which the main result of \cite{hoffman2024density} proved is equal to \eqref{eq:cI}.

We are interested in ARW on $\comb{n}$, the finite comb graph of length $n$. The subgraph $\ii{1, n}$ composes the \textit{spine}. There are sinks at $0$ and $n+1$ and one \textit{tooth} adjacent to each spine vertex (see Figure~\ref{fig:comb}). Let $S_n$ be the number of particles left sleeping on $\comb{n}$ when sampling from the stationary distribution of driven-dissipative ARW. Define the \textit{critical density for the comb} as
\begin{align*}
    \critcomb{\lambda} := \liminf_{n\to\infty} \frac{\mathbf{E}[S_n]}{n}.
\end{align*}
Note that $\critcomb{\lambda} \in [0,2]$ is the sum of the average densities on the spine and teeth. This is a natural viewpoint for one-dimensional graphs indexed by $\ii{1,n}$ and connects nicely with our methods.

\begin{figure} 
\centering
    \begin{tikzpicture}[scale = .75]
    \small 
	\draw (-3.9,0) -- (3.9,0);
	\foreach \x in { -3,...,3} {
            \draw (\x,0) -- (\x,1);
            \draw [fill=black] (\x,0) circle [radius=0.075];
            \draw [fill=black] (\x,1) circle [radius=0.075];
            \node at (\x, -0.5) {\the\numexpr\x+4\relax};
        }
        \node at (-4.1,0) {$\times$};
        \node at (-4.1,-.5) {$0$};
        \node at (4.1,0) {$\times$};
        \node at (4.1,-.5) {$8$};
    \end{tikzpicture}
    \vspace{-0.25cm}
    \caption{$\comb{7}$ with sinks denoted by $\times$. }
  \label{fig:comb}
\end{figure}

\subsection{Results}

 Our main contribution is developing an analog of interval percolation, which we call \textit{comb percolation}, that comes with a critical value $\critstarc$ related to $\critComb$. See Section~\ref{sec:mulp} for the definition.

\begin{thm}\thlabel{thm:cp}
It holds that $\critstarc(\lambda) \leq \critcomb{\lambda}$ for all $\lambda >0$.
\end{thm}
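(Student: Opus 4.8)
The plan is to mirror the strategy from \cite{hoffman2024density} for the interval, adapted to the comb. At a high level, comb percolation is (by its definition in Section~\ref{sec:mulp}) a process that tracks the number of instructions consumed by \emph{balanced flows} of particles on $\comb{n}$ in which one is permitted to wake sleeping particles for free rather than waiting for an active particle to arrive. The quantity $\critstarc(\lambda)$ is the superadditive limit of the normalized ``cost'' of the optimal such flow, while $S_n$ counts particles actually left sleeping when stabilizing the all-active configuration (which samples the stationary distribution). The inequality $\critstarc(\lambda) \le \critcomb{\lambda}$ should follow because any genuine stabilization of ARW on $\comb{n}$ induces a legal configuration for comb percolation whose cost is at most what the stabilization ``sees,'' so the percolation optimum can only be smaller.

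Concretely, first I would fix $n$, place an active particle at every site of $\comb{n}$, and stabilize using the abelian property / site-wise odometer. I would record, for each site, the odometer (number of topplings) and in particular the set of sites that end up holding a sleeping particle; by the exact-sampling fact quoted in the introduction this yields a sample from the stationary distribution, so $\mathbf{E}[S_n]$ is the expected number of such sleeping sites. Second, I would exhibit a map from this stabilization trajectory to an admissible state of the comb percolation process at ``level $n$'': the topplings of the stabilization prescribe a balanced flow of particles across the spine and into/out of the teeth, and the sleeping particles prescribe which sites get the ``sleep instruction'' used. The point is that comb percolation allows waking sleeping particles without charge, so it is strictly more permissive than a true stabilization; hence the image of this map is a \emph{valid} percolation configuration, and its associated instruction count is at most $S_n$ plus lower-order boundary terms (or exactly $S_n$, depending on the bookkeeping conventions fixed in Section~\ref{sec:mulp}). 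Third, since $\critstarc(\lambda)$ is defined as $\lim_n$ (existing by superadditivity) of the normalized \emph{minimum} cost over all valid percolation configurations at level $n$, and we have produced one valid configuration of cost $\le S_n$, we get $\critstarc(\lambda) \cdot n \le \mathbf{E}[S_n] + o(n)$; dividing by $n$ and taking $\liminf$ gives $\critstarc(\lambda) \le \critcomb{\lambda}$.

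The main obstacle, I expect, is making the second step — the correspondence between an honest ARW stabilization on $\comb{n}$ and an admissible comb-percolation configuration — precise enough that the cost comparison is an actual inequality rather than a heuristic. On the interval this is where \cite{hoffman2024density} does real work: one must track ``flows in'' and ``flows out'' at each layer, handle the two sink endpoints, and verify that the percolation rules (which govern how the shape-building blocks compose across adjacent vertices) are genuinely satisfied by whatever pattern of topplings the stabilization produced. For the comb the extra subtlety is the tooth at each spine vertex: a stabilization may push particles up a tooth and back, and one must check that this round trip is representable within the comb-percolation building blocks (the ``teeth'' versus ``spine'' shapes) and that it is charged correctly. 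I would handle this by induction on the spine coordinate, peeling off one vertex-plus-tooth gadget at a time and checking that the induced local flow matches an allowed transition of the percolation process, exactly as the interval argument peels off one vertex at a time.

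A secondary technical point is the $\liminf$ versus $\lim$ discrepancy: $\critcomb{\lambda}$ is defined with a $\liminf$ whereas $\critstarc(\lambda)$ is a genuine limit. This is not an obstruction for the direction we want — we only need the percolation limit to lie below the subsequential limit of $\mathbf{E}[S_n]/n$, and since our construction produces a valid percolation configuration for \emph{every} $n$, the bound $\critstarc(\lambda) n \le \mathbf{E}[S_n] + o(n)$ holds for all $n$, so passing to any subsequence realizing the $\liminf$ and then to the limit is harmless. I would state this carefully at the end so the reader sees that no regularity of $\mathbf{E}[S_n]/n$ beyond what is given is needed.
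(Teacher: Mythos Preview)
Your approach has the direction of the argument reversed, and this stems from a misreading of what $\critstarc$ is. By definition (Section~\ref{sec:mulp}), $\critstarc = \lim_k \E[X_k]/k$ where $X_k$ is the \emph{maximum} height over all infection paths from $\mathbf{0}$ at step $k$; under the correspondence $\Phi$ of Section~\ref{sec:mulpo} the height $s_n$ of a path equals the number of particles left sleeping by the corresponding stable odometer, not an ``instruction count.'' Thus $\critstarc$ is the \emph{maximal} achievable sleeping density over the class of stable odometers, not a minimum cost. Pushing the true odometer forward into comb percolation, as you propose, yields one particular infection path with $s_n = S_n \le X_n$, which points toward $\critcomb{\lambda} \le \critstarc(\lambda)$ --- the opposite inequality (and one the paper only conjectures; see Section~\ref{sec:fw}). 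Your ``more permissive'' intuition is correct but cuts the wrong way: because comb percolation ranges over a larger class than the true odometer, its optimum lies \emph{above} $S_n$, not below it.

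The paper's proof runs in the other direction. Fix $\rho<\critstarc$, pick $k$ with $\greedycomb{k}{\lambda}>\rho$, take the $k$-greedy infection path, and use surjectivity of $\Phi$ (\thref{prop:surj}) to pull it \emph{back} to a stable odometer $u$. The substantive work --- entirely absent from your sketch --- is showing that for the carefully chosen flow $f_0 = -\lfloor (1-\rho/2)n\rfloor$ this $u$ is \emph{nonnegative} with overwhelming probability: this requires concentration for the greedy path (\thref{prop:greedy-path-size}) and for the minimal odometer (\thref{prop:min-odometer-concentration}), combined with the flow lemmas (Lemmas~\ref{lem:flow} and~\ref{lem:uniflow}). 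Only once $u\ge 0$ does the least-action principle (\thref{lem:dual.lap}) apply, giving that the true odometer sends at most $|f_0|$ particles to each sink and hence $S_n \ge 2n - 2|f_0| \ge \rho n$ with overwhelming probability. Extracting a lower bound on $S_n$ requires producing a specific nonnegative competitor and invoking least-action; embedding the true odometer into percolation cannot do this.
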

We believe that in fact $\critComb = \critstarc$ (see Section~\ref{sec:fw}). 
To prove \thref{thm:cp}, we start by generalizing interval percolation to a broader class of percolation processes parameterized by a probability distribution on sequences of shapes in $\mathbb N^d$. Interval and comb percolation can be viewed as special instances with distinct shape laws. This yields a new perspective on layer percolation by separating the ``building" from the ``bricks." 
Once comb percolation is constructed, we use ideas from \cite{hoffman2024density} and  \cite{hoffman2024hockey} to connect it to ARW and prove the lower bound in \thref{thm:cp}.

Our second result gives bounds on $\critComb$ in terms of $\critinterval$.
As explained at \eqref{eq:greed}, $\critstari$, and thus $\critinterval$, is the limit of the expected speed of greedy approximations $\critical^{(k)}_{\mathtt{INTERVAL}}$. Random scalings of these quantities are used in our lower bound. 
We write $\Geo(p)$ for the geometric distribution supported on the positive integers and $\Geo_0(p)$ to denote a geometric distribution supported on the nonnegative integers. 

\begin{thm}\thlabel{thm:bound}
    Given $\lambda>0$ and $K \sim \Geo(\lambda/(1+\lambda))$, it holds that
    \begin{align*}
        \frac{\lambda}{1+\lambda}\left(1+\mathbf{E}[K\greedyinterval{K}{3\lambda/2}]\right)\le  \critcomb{\lambda} \le 1+\critinterval(3\lambda/2).
    \end{align*}
\end{thm}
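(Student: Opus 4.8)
The plan is to prove the two bounds separately, exploiting the fact that ARW on $\comb{n}$ can be compared to ARW on the interval $\ii{1,n}$ by either ignoring or incorporating the teeth.

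\textbf{Upper bound.} The idea is that a comb is ``not much worse'' than an interval, because each tooth is a pendant vertex of degree one. First I would set up a coupling/monotonicity argument: stabilize ARW on $\comb{n}$ in a way that first stabilizes the spine (treating the teeth as sites that can hold arbitrarily many particles), and observe that whenever the spine chain at a vertex $v$ topples, it sends a particle to the tooth over $v$ with probability $\tfrac13$ (degree $3$). A particle sitting on a tooth either sleeps there (and never leaves, contributing at most $1$ to the tooth density) or is eventually kicked back to the spine. The net effect on the spine dynamics is that each spine vertex emits extra particles back into the spine at a controlled rate; heuristically each tooth ``reflects'' particles so that the effective sleep rate the spine feels is inflated. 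Quantitatively, one expects that the spine, with teeth attached, behaves like an interval with a modified sleep parameter — and $3\lambda/2$ is exactly the parameter one gets by the following bookkeeping: a particle on a tooth falls asleep at rate $\lambda$ and is woken when the particle below it topples; comparing the rate of ``successful sleep events'' the spine sees to those on a bare interval gives the factor $3/2$ (half the time the extra trip through the tooth, rate-$1$ return versus rate-$\lambda$ sleep, multiplies the effective sleep rate by $1 + \tfrac{\lambda}{1+\lambda}\cdot(\text{something})$, which I would pin down to be $\le 3\lambda/2$). The ``$1+$'' absorbs the at-most-density-one contribution from sleepers stuck on teeth. Concretely I would prove: $\mathbf{E}[S_n] \le n + \mathbf{E}[S'_n(3\lambda/2)]$ where $S'_n(\mu)$ counts sleepers for interval ARW with sleep rate $\mu$, via a site-by-site coupling, then divide by $n$ and take $\liminf$, using the existence of $\critinterval(3\lambda/2)$ from \eqref{eq:cI}.

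\textbf{Lower bound.} Here I would run ARW on $\comb{n}$ but restrict attention to what happens ``locally'' at one spine vertex together with its tooth, and extract a guaranteed amount of sleeping mass. Start the stationary sampling by putting one active particle at every site (spine and teeth) and stabilize. Focus on a fixed spine vertex $v$: the tooth particle above it performs a random walk between the tooth-leaf and $v$; before it can escape to $v$ it falls asleep with probability $\tfrac{\lambda}{1+\lambda}$ at each visit to the leaf, and conditional on the number $K$ of particles that get deposited on (or pass through) that tooth region, the tooth retains a geometric-with-parameter $\lambda/(1+\lambda)$ number of sleepers — this is the source of $K \sim \Geo(\lambda/(1+\lambda))$ and the leading factor $\tfrac{\lambda}{1+\lambda}$. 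Separately, the particles that do return to the spine behave, as far as the spine is concerned, like the input to an interval percolation / greedy process: the $k$-th greedy approximation $\greedyinterval{k}{\cdot}$ lower-bounds the sleeper-production rate when at most $k$ layers are explored, and the relevant scaling is $3\lambda/2$ because of the degree-$3$ toppling as above. Multiplying the density produced on the spine by $K$ (the random number of relevant layers) and adding the $1$ for the sleeper on the tooth itself, then taking expectations, yields $\tfrac{\lambda}{1+\lambda}(1 + \mathbf{E}[K\greedyinterval{K}{3\lambda/2}])$. To make this rigorous I would invoke \thref{thm:cp} (so $\critstarc(\lambda) \le \critcomb{\lambda}$) and then lower-bound $\critstarc(\lambda)$ itself: comb percolation dominates an interval percolation whose per-layer increments are randomly rescaled by the tooth occupation variables, and the greedy approximations $\critical^{(k)}_{\mathtt{INTERVAL}}$ converging up to $\critstari = \critinterval$ give the stated explicit bound after the $\Geo$-mixing over $k$.

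\textbf{Main obstacle.} The hard part will be the lower bound — specifically, making precise the claim that, inside comb percolation, the contribution of a spine vertex factors as (tooth occupation, a $\Geo(\lambda/(1+\lambda))$ variable $K$) times (a greedy interval-percolation speed at the rescaled parameter $3\lambda/2$ truncated to $K$ layers). One must show the tooth-occupation variable is genuinely independent of (or at least stochastically dominates in the right conditional sense) the interval-percolation increments it multiplies, and that truncating to $K$ layers is legitimate rather than merely heuristic; this requires carefully unwinding the definition of comb percolation from Section~\ref{sec:mulp} and matching its shape law against the interval shape law with the geometric tooth weights. The factor $3\lambda/2$ versus the naive $\lambda$ — coming from the degree-$3$ vertices altering the sleep-versus-move competition — must be tracked consistently through both bounds, and verifying it agrees on the two sides is where I expect the bookkeeping to be most delicate. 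The upper bound, by contrast, should follow from a fairly standard abelian-property coupling once the right modified sleep rate is identified.
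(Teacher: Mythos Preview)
Your upper-bound plan is essentially the paper's, but the toppling scheme you describe is internally inconsistent: you cannot both ``treat the teeth as sites that can hold arbitrarily many particles'' (which would make them sinks for the spine in step one) and have particles ``eventually kicked back to the spine''. The clean version, which pins the modified rate at \emph{exactly} $3\lambda/2$ rather than merely ``$\le 3\lambda/2$'': start from one active particle per site and in step one topple only the particles that began on the spine, leaving the initial tooth-particles in place. Whenever a spine-particle jumps \Up, the tooth then holds at least two particles, so any \Sleep\ there is void; topple the tooth until a \Down\ occurs, returning one particle. Thus \Up\ is a null move for the spine, and the spine executes precisely interval ARW with sleep probability $\tfrac{\lambda}{\lambda+2/3}=\tfrac{3\lambda/2}{1+3\lambda/2}$, i.e.\ sleep rate $3\lambda/2$. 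After step one the spine configuration is exactly the interval stationary law at $3\lambda/2$, each tooth still carries one active particle, and further stabilization only removes mass; hence $S_n\le n+|\tau_n|$ with $\E|\tau_n|/n\to\critinterval(3\lambda/2)$.

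For the lower bound you have the right skeleton---reduce via \thref{thm:cp} to lower-bounding $\critstarc$, then exploit the coupling with interval percolation at $3\lambda/2$ (\thref{lemma:IncreasedSleepRateCoupling})---but your interpretation of $K$ is wrong and the central construction is missing. The variable $K$ has nothing to do with ``the number of sleepers a tooth retains'' or local ARW dynamics at a single spine vertex. The paper instead builds an explicit infection path in comb percolation as a \emph{renewal process}: from the current cell, follow the $k$-greedy path in the interval sub-shapes, increasing $k$ until the first step whose terminal chunk has its $T$-indicator (the tooth-sleep bit in the shape law $\lawC$) equal to $1$; then take that extra $+1$ in height and restart. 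Because the relevant $T$-indicators are i.i.d.\ $\Ber(\lambda/(1+\lambda))$ across steps and independent of the interval sub-shapes, the block length is $K\sim\Geo(\lambda/(1+\lambda))$ and the height gained per block is distributed as $X_K+1$, where $X_k$ is the interval greedy height at parameter $3\lambda/2$. Renewal-reward theory then yields exactly $\tfrac{\lambda}{1+\lambda}\bigl(1+\E[K\,\greedyinterval{K}{3\lambda/2}]\bigr)$ as a lower bound on $\critstarc$. Your ``factorization at one vertex'' heuristic does not produce this structure, and without the renewal scheme there is no mechanism that converts the shape-law domination into that specific expression.
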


These bounds are interesting primarily because they connect $\critical_{\mathtt{COMB}}$ to $\critical_{\mathtt{INTERVAL}}$. The $3 \lambda /2$ comes from moves to the teeth creating more opportunities for particles to fall asleep on the spine (see \thref{lemma:IncreasedSleepRateCoupling}). Note that $\rho^{(k)}$ is easily seen to be strictly increasing in $\lambda$.
The lower bound is proven by constructing a renewal scheme in comb percolation that grows more slowly than $\critstarc$, and then invoking \thref{thm:cp}.  The upper bound is proven with a simple toppling scheme that uses  the well-known abelian property to realize a sample from ARW on the interval at stationarity after partially stabilizing the comb. Since it is known that $\critinterval(\lambda)<1$ for all $\lambda > 0$ \cite{HoffmanRicheyRolla20}, we deduce that $\critcomb{\lambda}<2$.

\subsection{Future work} \label{sec:fw}

The more general $\nu$-layer percolation process described in Section~\ref{sec:mulp} may have interesting intrinsic properties as well as potential applications to related processes such as the {stochastic sandpile model}. 
We also believe that much of the theory developed for the interval in \cite{hoffman2024density} as well as in Forien's work \cite{forien2025newproof} can be extended to ARW on the comb.
It appears possible, via an adaptation of the \textit{box lemma} from \cite{hoffman2024density} or the \textit{ejector seat} method from \cite{forien2025newproof}, to prove that $\limsup \E[S_n]/n \leq   \critstarc$ and so, by \thref{thm:cp}, the comb has a well-defined limiting critical density. We also believe that versions of the density, hockey-stick, and cutoff conjectures from \cite{hoffman2024density, hoffman2024hockey, hoffman2025cutoff} could be proven on the comb.

Let $T_n$ and $B_n$ be the number of sleeping particles on the teeth and spine, respectively, when sampling from the stationary distribution of the driven-dissipative ARW Markov chain on $\comb{n}$. Define $\critTeeth{\lambda}:= \lim_{n\to\infty} \mathbf{E}[T_n]/n$ and similarly $\critSpine{\lambda}:= \lim_{n\to\infty} \mathbf{E}[B_n]/n$. %
We conjecture that these limits exist and satisfy the relations 
\begin{align}
    \critTeeth{\lambda} < \critSpine{\lambda} < \critinterval(\lambda),  \label{eq:<}
\end{align}
supported by the simulation evidence in Figures~\ref{fig:sims} and ~\ref{fig:triplehockeystick}. 
This conjecture reveals a limitation of comb percolation, because, as far as we can tell, the methods from \cite{hoffman2024density} do not distinguish finely enough as to where particles in a stable configuration lie to tell us what happens on the teeth versus the spine. Our higher-dimensional approach in Section~\ref{sec:3d}, which does track this information, could provide insight on this matter.  

Another natural direction is to continue our effort to extend layer percolation to other geometries. One possible direction is to study one-dimensional graphs like the comb but with the teeth replaced by copies of some other graph $H$ attached to each spine vertex. For the comb, $H$ is a single vertex, and it appears that the important features of layer percolation are preserved. Exploring what happens with other $H$ could yield new insights into layer percolation and ARW. More complex one-dimensional graphs, such as those considered in \cite{ahlberg2015asymptotics} would be another interesting direction. Of course, we would also like to extend to higher dimensions. A natural next step would be the slab graph defined as the $k\times n$ subgraph of $\mathbb Z^2$ with $k$ fixed.

\begin{figure}

    \centering
    \includegraphics[scale = 0.6]{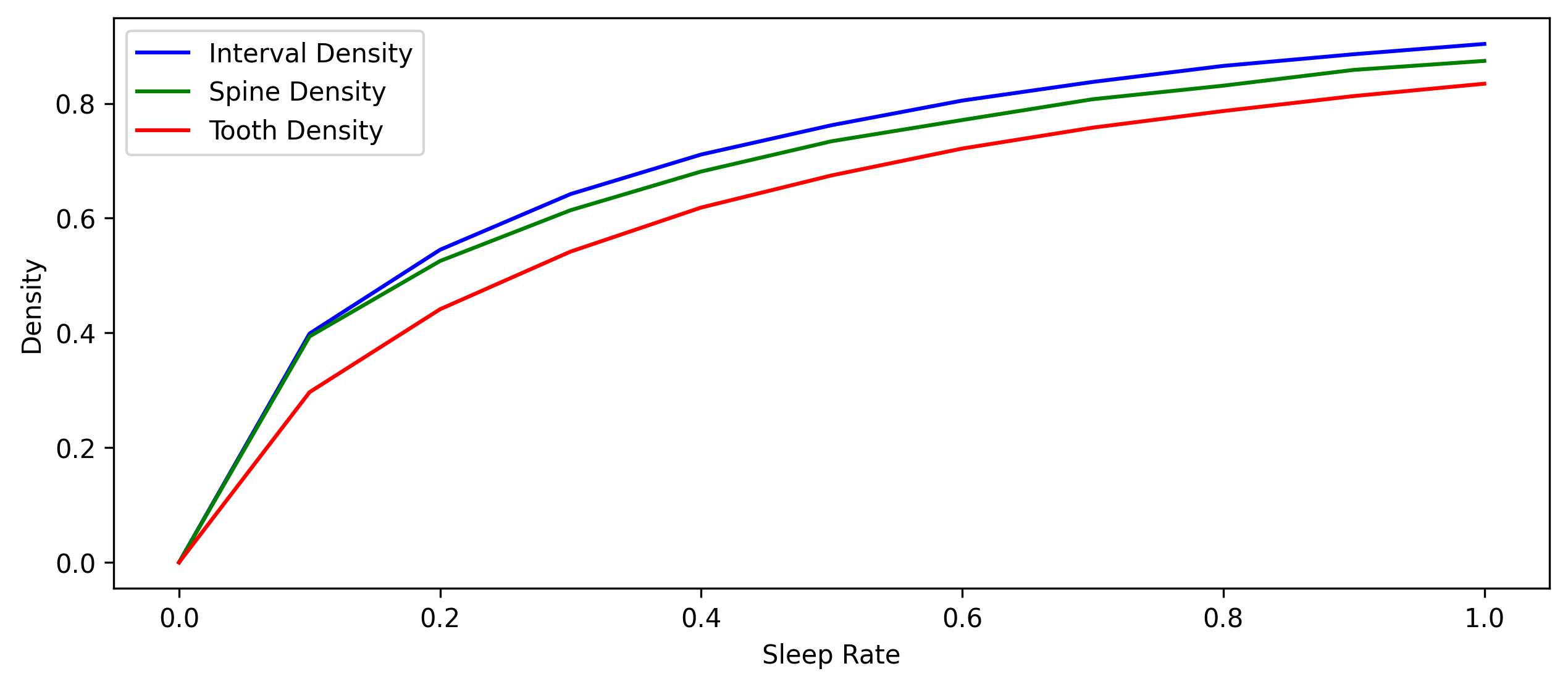}

  \caption{Plot of the empirical spine and teeth densities on $\comb{250}$ as well as the empirical density on the interval $\ii{1,250}$ for the respective driven-dissipative models at stationarity. The code used to run simulations and generate all plots in this paper may be found in this Github repository: \href{https://github.com/aldomorelli/Comb-ARW-Simulation-and-General-Layer-Percolation}{Comb ARW and Layer Percolation Simulation}.
  } 
  \label{fig:sims}
\end{figure}
\begin{figure}

    \centering
    \includegraphics[scale = 0.6]{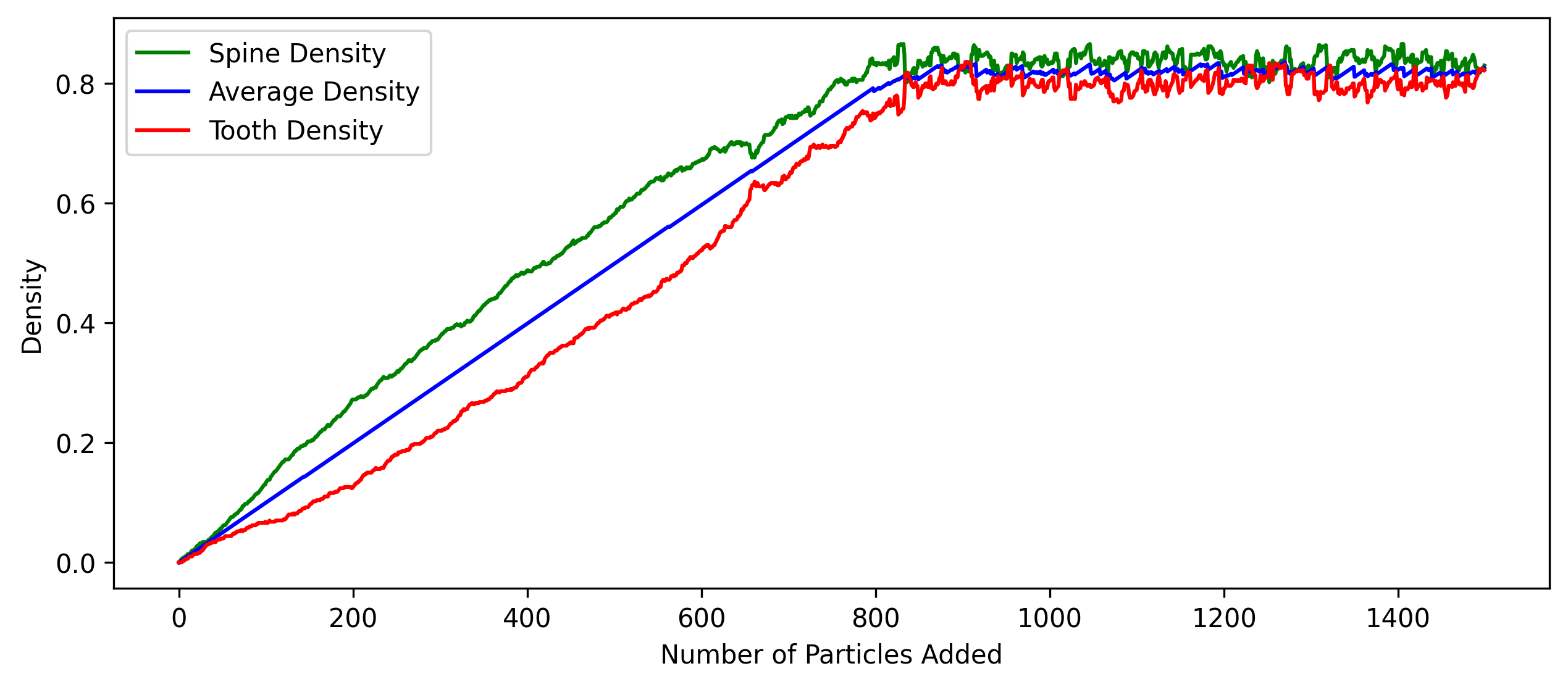}

  \caption{Plot of the spine and tooth densities over time in a sample path of driven-dissipative ARW on $\comb{500}$ with uniform driving and sleep rate $\lambda=0.8$, along with total average density on the comb. That the density graph converges to a piecewise linear-then-constant function for ARW on $\mathbb Z^d$ is known as the hockey-stick conjecture \cite{levine2023universality, hoffman2024hockey}. It is interesting to note that although the average of the tooth and spine densities appears to satisfy the hockey-stick conjecture, the individual densities do not. The fluctuations also appear larger for the separate densities but cancel out after averaging.}
  \label{fig:triplehockeystick}
\end{figure}

\subsection{Organization} 
In Section~\ref{sec:mulp} we define $\nu$-layer percolation then give a few examples which include interval and comb percolation. Section~\ref{sec:od} describes some basics about the representation of ARW that uses instructions at the sites and odometers that count the number of instructions executed. Section~\ref{sec:mulpo} connects Sections~\ref{sec:mulp} and \ref{sec:od} by describing how to translate between infection paths and odometers.  Section~\ref{sec:cp} proves \thref{thm:cp} and Section~\ref{sec:bound} proves \thref{thm:bound}. The appendix contains the statements of results we use from \cite{hoffman2024density} along with explanations as to why they continue to hold for comb percolation.

\section{\texorpdfstring{$\nu$}{nu}-layer percolation} \label{sec:mulp}

Non-empty, finite subsets of $\mathbb{N}^d$ are called \emph{$d$-dimensional shapes}. 
Given some probability measure $\law$ on sequences of $d$-dimensional shapes, we define \textit{$\nu$-layer percolation} to be a $(d+1)$-dimensional directed percolation process on $\mathbb N^d \times \mathbb N$. Elements of $\N^d \times \{k\}$ are called \textit{cells} at \textit{step $k$}, and following \cite{hoffman2024density} can be denoted $(r, s_1, s_2, \ldots, s_{d-1})_k$ to mean $(r, s_1, s_2, \ldots, s_{d-1}, k)$. Each cell at step $k$ infects a finite set of cells at step $k+1$ in the following manner. 

Let $(\shape{0}^k, \shape{1}^k, \ldots)$ be sampled according to $\law$. A cell $\mathbf{c} = (r, s_1, s_2, \ldots, s_{d-1})_k$ is said to be in \emph{diagonal} $j=r + s_1 + \ldots + s_{d-1}$, and its infection set will be a translation of $\shape{j}^k \times \{k+1\}$. For each $i \ge 0$, denote the maximal value of the $r$-coordinate of $\shape{i}^k$ by $r_i^k$. Then, letting $\ell^k_j=r^k_0+\dots+r^k_{j-1}$, the set of cells infected by $\mathbf c$ is given by 
$$
\left\{\left(\ell^k_j+r', s_1 + s'_1, \ldots, s_{d-1} + s'_{d-1}\right)_{k+1} \text{ such that } \left(r', s'_1, \ldots, s'_{d-1}\right) \in \shape{j}^k\right\}. 
$$
An \emph{infection path} starting from cell $\mathbf c_j$ is a sequence of cells $(\mathbf c_k)_{j\leq k \leq l}$ for some $j \le l \le \infty$ such that cell $\mathbf c_k$ infects cell $\mathbf c_{k+1}$ for all $j\leq k < l$. The \emph{infection set at step $k$} is the set of cells at step $k$ that belong to at least one infection path starting from $\mathbf 0 := (0,\hdots, 0)_0$. Let $\mathcal I$ denote the set of all infection paths starting from $\mathbf 0$. 

We define the \textit{height of $\nu$-layer percolation at step $k$} as 
\begin{align*}
    X_k := \max\{s_1+\dots+s_{d-1}\colon  (r,s_1, \hdots, s_{d-1})_k \text{ is infected by } \mathbf 0\}.
\end{align*}
A \textit{$k$-greedy path} is an infection path from $\mathbf 0$ to step $k$ that attains height $X_k$. As these paths are not necessarily unique, we will refer to \emph{the} $k$-greedy path as a uniformly sampled $k$-greedy path.
The \textit{expected speed of the $k$-greedy path} and the \emph{critical speed} are respectively defined as:
\begin{align}
    \rho^{(k)}:=\frac{1}{k} \mathbf{E}[X_k] \label{eq:greed}
\qquad \text{ and } \qquad 
    \rho^* := \lim_{k\to\infty} \rho^{(k)}. %
\end{align}
The limit $\rho^*$ exists since the $X_k$ are easily seen to be superadditive by concatenating greedy paths. 
We conclude this section with a few illustrative examples. The first two are simple starting examples to give the idea of layer percolation; they have no connection with the rest of the paper. The final two are interval and comb percolation. 

\subsection{1-percolation} Let $d=2$ and $\law= \law_1$ deterministically place probability one on sequences of a $3\times 3$ pixelated `1' shape whose bottom corner is (0,0).  For $\nu_1$-layer percolation, we have $X_k = 2k$ almost surely so that $\rho^*= 2$.  The sets of all cells infected by $\mathbf 0$ in the first three steps of $\law_1$-layer percolation are given in Figure~\ref{fig:onepercolation}.

\begin{figure}
  \centering
\begin{tikzpicture}[scale=.3]
\begin{scope}[shift={(0,0)}]
  \begin{scope}[shift={(0,0)}]
    \fill (0,0) rectangle +(1,1);
    \draw[thick,gray] (0,0) grid[step=1] (9,5);
  \end{scope}

\end{scope}
\begin{scope}[shift={(12,0)}]
  \fill[red] (0,0) rectangle +(1,1);
  \fill[orange] (1,0) rectangle +(1,1);
  \fill[yellow] (2,0) rectangle +(1,1);
  \fill[yellow] (1,1) rectangle +(1,1);
  \fill[yellow] (0,2) rectangle +(1,1);
  \fill[green] (1,2) rectangle +(1,1);
  \draw[thick,gray] (0,0) grid[step=1] (9,5);

\end{scope}
\begin{scope}[shift={(24,0)}]

  \filloffsetone[red]{(0,0)}
  \fillcellwithoffsetdiag{orange}{(2,0)}
  \fillcellwithoffset{orange}{(3,0)}
  \fillcellwithoffset{orange}{(3,1)}
  \fillcellwithoffset{orange}{(3,2)}
  \fillcellwithoffset{orange}{(2,2)}
  \fillcellwithoffset{orange}{(4,0)}
  \fillcellwithoffsetdiag{yellow}{(4,0)}
  \fillcellwithoffset{yellow}{(5,0)}
  \fillcellwithoffset{yellow}{(5,1)}
  \fillcellwithoffset{yellow}{(5,2)}
  \fillcellwithoffset{yellow}{(4,2)}
  \fillcellwithoffset{yellow}{(6,0)}
  \fillcellwithoffset{yellow}{(6,1)}
  \fillcellwithoffset{yellow}{(4,1)}
  \fillcellwithoffset{yellow}{(5,3)}
  \fillcellwithoffset{yellow}{(4,3)}
  \fillcellwithoffset{yellow}{(6,2)}
  \fillcellwithoffset{yellow}{(5,4)}
  \fillcellwithoffset{yellow}{(4,4)}
  \fillcellwithoffset{green}{(7,2)}
  \fillcellwithoffset{green}{(8,2)}
  \fillcellwithoffset{green}{(7,3)}
  \fillcellwithoffset{green}{(7,4)}
  \fillcellwithoffset{green}{(6,4)}
  \fillcellwithoffsetdiag{green}{(6,2)}
  
  \draw[thick,gray] (0,0) grid[step=1] (9,5);
\end{scope}

\draw[->, thick] (9.75,2.5) -- (11.25,2.5);
\draw[->, thick] (21.75,2.5) -- (23.25,2.5);
\end{tikzpicture}

  \caption{Infection sets at steps 0, 1, and 2 of $\nu_1$-layer percolation. The latter two steps are color-coded to illustrate infection across the steps. 
  }
  \label{fig:onepercolation}
\end{figure}
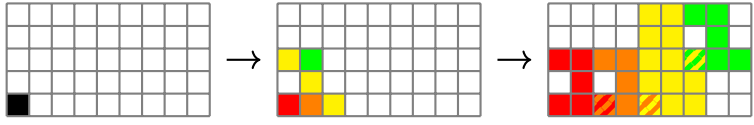

\subsection{Domino percolation} 
Let $d=2$ and $\nu= \nu_{\mathtt{D}}$, the law which each shape is sampled independently and uniformly from $\{(0,0), (0,1)\}$ and $\{(0,0),(1,0)\}$. A sample of the infection set of $\mathbf{0}$ in the first four steps along with step $250$ is given in Figure \ref{fig:dominopercolation}. We are not sure what the critical speed is.

\begin{figure}
  \centering
\begin{tikzpicture}[scale=.3]
\begin{scope}[shift={(0,0)}]
  \begin{scope}[shift={(0,0)}]
    \fill (0,0) rectangle +(1,1);
    \draw[thick,gray] (0,0) grid[step=1] (5,4);
  \end{scope}

\end{scope}
\begin{scope}[shift={(8,0)}]
    \fill (0,0) rectangle +(1,1);
    \fill (1,0) rectangle +(1,1);
  
  \draw[thick,gray] (0,0) grid[step=1] (5,4);

\end{scope}
\begin{scope}[shift={(16,0)}]
    \fillcellwithoffset{red}{(0,0)}
    \fillcellwithoffset{orange}{(1,0)}
    \fillcellwithoffset{orange}{(0,1)}
  
  \draw[thick,gray] (0,0) grid[step=1] (5,4);
\end{scope}
\begin{scope}[shift={(24,0)}]
    \fillcellwithoffset{red}{(0,0)}
    \fillcellwithoffset{red}{(1,0)}
    \fillcellwithoffsetdiag{orange}{(1,0)}
    \fillcellwithoffset{orange}{(1,1)}
    \fillcellwithoffset{orange}{(2,0)}
    \fillcellwithoffset{orange}{(2,1)}
  
  \draw[thick,gray] (0,0) grid[step=1] (5,4);
\end{scope}

\draw[->, thick] (5.75,2) -- (7.25,2);
\draw[->, thick] (13.75,2) -- (15.25,2);
\draw[->, thick] (21.75,2) -- (23.25,2);
\node at (31.25,2) {$\cdots$};
\end{tikzpicture}
\includegraphics[scale = 0.15]{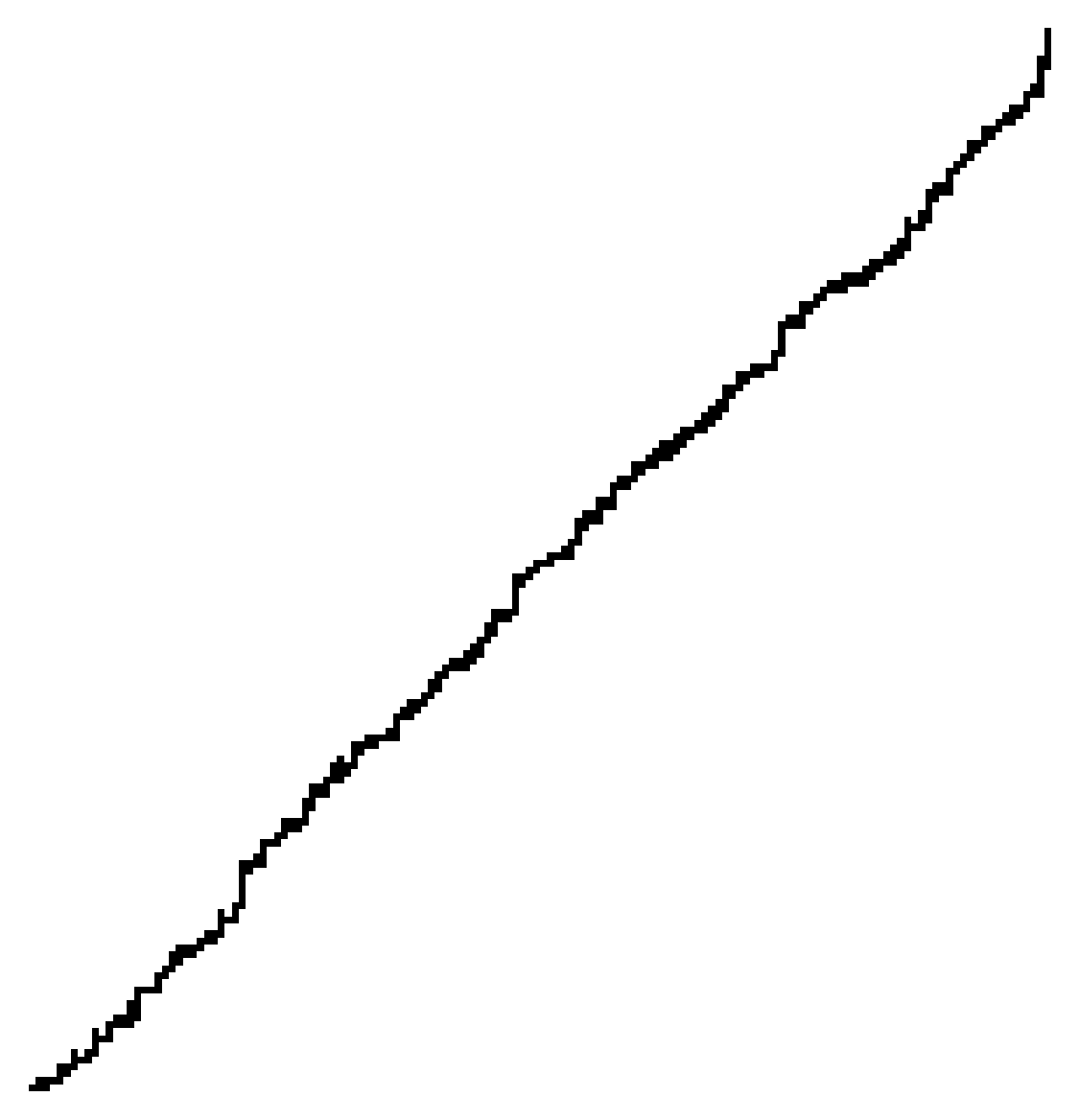}

  \caption{Infection sets at steps 0, 1, 2, 3 and 250 of a sample $\nu_{\mathtt{D}}$-layer percolation. In step $2$ the shape associated to both diagonals is $\{(0,0),(1,0)\}$. At step 250 we have $X_{250}= 140$. 
  }
  \label{fig:dominopercolation}
\end{figure}

\subsection{Interval percolation} \label{sec:ip}
In the case of \textit{interval percolation} we have $d=2$, and under the law $\nu = \lawI(\lambda)$ the shapes are i.i.d. Each $\shape{j}$ has as its base the horizontal strip of cells starting at $(0,0)$ and with width $R_j \sim \Geo(1/2)$, that is $\{(i, 0) : 0 \le i \le R_j - 1\}$. For each base cell, the vertically adjacent cell is included independently with probability $\lambda/(1+\lambda)$ whose inclusion we track with independent random variables $B^j_i \sim \Ber(\lambda /(1+\lambda))$ for $i=1, \hdots, R_j$. The main theorem of \cite{hoffman2024density} proved that the critical speed for interval percolation is equal to $\critinterval$, and also to other natural formulations for a critical value of ARW. See Figure~\ref{fig:intervalperc} for an example of a few steps.

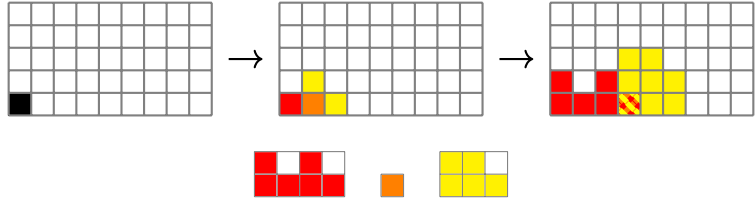
\begin{figure}
  \centering
\begin{tikzpicture}[scale=.3]
\begin{scope}[shift={(0,0)}]
  \begin{scope}[shift={(0,0)}]
    \fill (0,0) rectangle +(1,1);
    \draw[thick,gray] (0,0) grid[step=1] (9,5);
  \end{scope}

\end{scope}
\begin{scope}[shift={(12,0)}]
  \fill[red] (0,0) rectangle +(1,1);
  \fill[orange] (1,0) rectangle +(1,1);
  \fill[yellow] (1,1) rectangle +(1,1);
  \fill[yellow] (2,0) rectangle +(1,1);
  \draw[thick,gray] (0,0) grid[step=1] (9,5);

\end{scope}
\begin{scope}[shift={(24,0)}]
    \fillcellwithoffset{red}{(0,0)}  
    \fillcellwithoffset{red}{(0,1)}  
    \fillcellwithoffset{red}{(1,0)}  
    \fillcellwithoffset{red}{(2,0)}  
    \fillcellwithoffset{red}{(2,1)}  
    \fillcellwithoffset{red}{(3,0)}  
    \fillcellwithoffsetdiag{orange}{(3,0)}  
    \begin{scope}[shift={(3,0)}]
        \fill[pattern={Lines[angle=135, distance=4pt, line width=2pt]}, pattern color=yellow] (0,0) rectangle ++(1,1);
    \end{scope}
    \fillcellwithoffset{yellow}{(3,1)}  
    \fillcellwithoffset{yellow}{(4,0)}  
    \fillcellwithoffset{yellow}{(5,0)}  
    \fillcellwithoffset{yellow}{(4,1)} 
    \fillcellwithoffset{yellow}{(5,1)}  
    \fillcellwithoffset{yellow}{(3,2)}  
    \fillcellwithoffset{yellow}{(4,2)}

  \draw[thick,gray] (0,0) grid[step=1] (9,5);
\end{scope}

\draw[->, thick] (9.75,2.5) -- (11.25,2.5);
\draw[->, thick] (21.75,2.5) -- (23.25,2.5);
\end{tikzpicture}

\vspace{10pt}

\begin{center}
    \begin{tikzpicture}[scale=.3]

    \filldraw[fill=red, draw=white] (0,0) rectangle +(1,1);
    \filldraw[fill=red, draw=white] (1,0) rectangle +(1,1);
    \filldraw[fill=red, draw=white] (2,0) rectangle +(1,1);
    \filldraw[fill=red, draw=white] (0,1) rectangle +(1,1);
    \filldraw[fill=red, draw=white] (2,1) rectangle +(1,1);
    \filldraw[fill=red, draw=white] (3,0) rectangle +(1,1);

    \draw[gray] (0,0) grid[step=1] (4,2);
\end{tikzpicture}
\quad 
\begin{tikzpicture}[scale=.3]

    \filldraw[fill=orange, draw=white] (0,0) rectangle +(1,1);

    \draw[gray] (0,0) grid[step=1] (1,1);
\end{tikzpicture}
\quad 
\begin{tikzpicture}[scale=.3]

    \filldraw[fill=yellow, draw=white] (0,0) rectangle +(1,1);
    \filldraw[fill=yellow, draw=white] (1,0) rectangle +(1,1);
    \filldraw[fill=yellow, draw=white] (2,0) rectangle +(1,1);
    \filldraw[fill=yellow, draw=white] (0,1) rectangle +(1,1);
    \filldraw[fill=yellow, draw=white] (1,1) rectangle +(1,1);

    \draw[gray] (0,0) grid[step=1] (3,2);
\end{tikzpicture}
\end{center}

  \caption{Infection sets at steps 0, 1, and 2 in an example of interval percolation, along with the shapes associated to each diagonal in step 1.} 
  \label{fig:intervalperc}
\end{figure}

\subsection{Comb percolation} \label{sec:cpdef}

Again we take $d=2$. The law $\nu = \lawC(\lambda)$ of the shape sequences is more complicated than in interval percolation since there is dependence between adjacent shapes, and the shapes themselves are more involved. 

Each $\shape{j}$ in a sequence drawn from $\lawC(\lambda)$ consists of a random number $U_j\sim \Geo(1/2)$ of overlapping \textit{chunks}. Associated to each chunk $1 \le i \le U_j$ are the random variables $R_{j,i}\sim \Geo(2/3)$, $T_{j,i}\sim \Ber(\lambda/(1+\lambda))$ and $S_{j,i,\ell}\sim \Ber(\lambda/(1+\lambda))$ for $1\le \ell\le R_{j,i}$. The base of the $i^\text{th}$ chunk is a horizontal strip of cells of length $R_{j,i}$. Above the $\ell^\text{th}$ cell we will include a vertical strip of cells of height $T_{j,i}+S_{j,i,\ell}$. All $U_j, R_{j,i}, T_{j,i}, S_{j,i,\ell}$ are independent with one exception: for $j \neq 0$, $T_{j,1}$ is \textit{inherited} from the last chunk of the previous shape, that is $T_{j,1}$ is set to equal $T_{j-1,U_{j-1}}$. 
Once we have finished constructing our chunks, we concatenate them horizontally, beginning at $(0,0)$, and overlapping one column each time, giving our final shape. The critical speed $\critstarc$ for $\lawC$-percolation is the value referenced in \thref{thm:cp}.

Here is an example of how one might build $\shape{0}$. Suppose that $U_0 = 3$, so the shape is composed of three chunks. For the first chunk say we have $R_{0,1}=3, T_{0,1}=1,$ and $S_{0,1,1}=0$, $S_{0,1,2}=1$, $S_{0,1,3}=0$.  For the second chunk say we have $R_{0,2}=2$, $T_{0,2}=0$, and $S_{0,2,1}=1$, $S_{0,2,2}=0$. For the last chunk, suppose $R_{0,3}=2$, $T_{0,3}=1$, and $S_{0,3,1}=0$, $S_{0.3,2}=1$. Then the three chunks and resulting $\shape{0}$ are, respectively, 
\begin{center}
\begin{tikzpicture}[scale=.3]
\begin{scope}[shift={(0,0)}]
  \begin{scope}[shift={(0,0)}]
    \filldraw[fill=red, draw=white] (0,0) rectangle +(1,1);
    \filldraw[fill=red, draw=white] (1,0) rectangle +(1,1);
    \filldraw[fill=red, draw=white] (2,0) rectangle +(1,1);
    \filldraw[fill=red, draw=white] (0,1) rectangle +(1,1);
    \filldraw[fill=red, draw=white] (1,1) rectangle +(1,1);
    \filldraw[fill=red, draw=white] (2,1) rectangle +(1,1);
    \filldraw[fill=red, draw=white] (1,2) rectangle +(1,1);
    \draw[gray] (0,0) grid[step=1] (4,3);
  \end{scope}

  \begin{scope}[shift={(6,0)}]
    \filldraw[fill=orange, draw=white] (0,0) rectangle +(1,1);
    \filldraw[fill=orange, draw=white] (1,0) rectangle +(1,1);
    \filldraw[fill=orange, draw=white] (0,1) rectangle +(1,1);
    \draw[gray] (0,0) grid[step=1] (4,3);
  \end{scope}

  \begin{scope}[shift={(12,0)}]
    \filldraw[fill=yellow, draw=white] (0,0) rectangle +(1,1);
    \filldraw[fill=yellow, draw=white] (1,0) rectangle +(1,1);
    \filldraw[fill=yellow, draw=white] (0,1) rectangle +(1,1);
    \filldraw[fill=yellow, draw=white] (1,1) rectangle +(1,1);
    \filldraw[fill=yellow, draw=white] (1,2) rectangle +(1,1);
    \draw[gray] (0,0) grid[step=1] (4,3);
  \end{scope}

\end{scope}
\end{tikzpicture}
\qquad 
\begin{tikzpicture}[scale=.3]
\begin{scope}[shift={(0,0)}]
  \begin{scope}[shift={(0,0)}]
    \filldraw[fill=red, draw=white] (0,0) rectangle +(1,1);
    \filldraw[fill=red, draw=white] (1,0) rectangle +(1,1);
    \filldraw[fill=red, draw=white] (2,0) rectangle +(1,1);
    \filldraw[fill=red, draw=white] (0,1) rectangle +(1,1);
    \filldraw[fill=red, draw=white] (1,1) rectangle +(1,1);
    \filldraw[fill=red, draw=white] (2,1) rectangle +(1,1);
    \filldraw[fill=red, draw=white] (1,2) rectangle +(1,1);

    \filldraw[
    draw=white,
    pattern={Lines[angle=45, distance=4pt, line width=2pt]},
    pattern color=orange 
    ] (2,0) rectangle +(1,1);
    \filldraw[fill=orange, draw=white] (3,0) rectangle +(1,1);
    \filldraw[
    draw=white,
    pattern={Lines[angle=45, distance=4pt, line width=2pt]},
    pattern color=orange 
    ] (2,1) rectangle +(1,1);

    \filldraw[
    draw=white,
    pattern={Lines[angle=45, distance=4pt, line width=2pt]},
    pattern color=yellow 
    ] (3,0) rectangle +(1,1);
    \filldraw[fill=yellow, draw=white] (4,0) rectangle +(1,1);
    \filldraw[fill=yellow, draw=white] (3,1) rectangle +(1,1);
    \filldraw[fill=yellow, draw=white] (4,1) rectangle +(1,1);
    \filldraw[fill=yellow, draw=white] (4,2) rectangle +(1,1);
    \draw[gray] (0,0) grid[step=1] (6,4);
  \end{scope}

\end{scope}
\end{tikzpicture}
\end{center}
If we were to continue this example and now build $\shape{1}$, because $T_{0,3}=1$ we would have that the $T_{1,1}$ is also $1$, via inheritance. An example of the first few steps of $\lawC$-layer percolation is given in Figure \ref{fig:combperc}.
\begin{figure}
  \centering
\begin{tikzpicture}[scale=.3]
\begin{scope}[shift={(0,0)}]
  \begin{scope}[shift={(0,0)}]
    \fill (0,0) rectangle +(1,1);
    \draw[thick,gray] (0,0) grid[step=1] (9,5);
  \end{scope}

\end{scope}
\begin{scope}[shift={(12,0)}]
  \fill[red] (0,0) rectangle +(1,1);
  \fill[orange] (1,0) rectangle +(1,1);
  \fill[yellow] (1,1) rectangle +(1,1);
  \fill[yellow] (2,0) rectangle +(1,1);
  \fill[green] (2,1) rectangle +(1,1);
  \fill[green] (1,2) rectangle +(1,1);
  \draw[thick,gray] (0,0) grid[step=1] (9,5);

\end{scope}
\begin{scope}[shift={(24,0)}]
    \fillcellwithoffset{red}{(0,0)}  
    \fillcellwithoffset{red}{(0,1)}  
    \fillcellwithoffset{red}{(1,0)}  
    \fillcellwithoffset{red}{(1,1)}  
    \fillcellwithoffsetdiag{orange}{(1,0)}  
    \fillcellwithoffsetdiag{orange}{(1,1)}  
    \fillcellwithoffset{orange}{(1,2)}  
    \fillcellwithoffset{orange}{(2,0)}  
    \fillcellwithoffset{orange}{(3,0)}  
    \fillcellwithoffset{orange}{(3,1)}  
    \fillcellwithoffsetdiag{yellow}{(3,0)}  
    \fillcellwithoffsetdiag{yellow}{(3,1)}  
    \fillcellwithoffset{yellow}{(4,0)}  
    \fillcellwithoffset{yellow}{(4,1)}  
    \fillcellwithoffset{yellow}{(4,2)}   
    \fillcellwithoffset{yellow}{(3,2)}  
    \fillcellwithoffset{yellow}{(4,3)}  
    \fillcellwithoffsetdiag{green}{(4,1)}
    \fillcellwithoffsetdiag{green}{(4,2)}    
    \fillcellwithoffset{green}{(5,1)}  
    \fillcellwithoffset{green}{(6,1)}  
    \fillcellwithoffset{green}{(7,1)}  
    \fillcellwithoffset{green}{(7,2)}  
    \fillcellwithoffset{green}{(5,2)}  
    \fillcellwithoffset{green}{(6,2)}  
    \fillcellwithoffsetdiag{green}{(7,2)}  
    \fillcellwithoffset{green}{(7,3)}  
    \fillcellwithoffsetdiag{green}{(4,3)}  
    
  \draw[thick,gray] (0,0) grid[step=1] (9,5);
\end{scope}

\draw[->, thick] (9.75,2.5) -- (11.25,2.5);
\draw[->, thick] (21.75,2.5) -- (23.25,2.5);
\end{tikzpicture}

\vspace{10pt}

\begin{center}
    \begin{tikzpicture}[scale=.3]

    \filldraw[fill=red, draw=white] (0,0) rectangle +(1,1);
    \filldraw[fill=red, draw=white] (1,0) rectangle +(1,1);
    \filldraw[fill=red, draw=white] (1,1) rectangle +(1,1);
    \filldraw[fill=red, draw=white] (0,1) rectangle +(1,1);

    \draw[gray] (0,0) grid[step=1] (2,2);
\end{tikzpicture}
\quad 
\begin{tikzpicture}[scale=.3]

    \filldraw[fill=orange, draw=white] (0,0) rectangle +(1,1);
    \filldraw[fill=orange, draw=white] (1,0) rectangle +(1,1);
    \filldraw[fill=orange, draw=white] (0,1) rectangle +(1,1);
    \filldraw[fill=orange, draw=white] (0,2) rectangle +(1,1);
    \filldraw[fill=orange, draw=white] (2,0) rectangle +(1,1);
    \filldraw[fill=orange, draw=white] (2,1) rectangle +(1,1);

    \draw[gray] (0,0) grid[step=1] (3,3);
\end{tikzpicture}
\quad 
\begin{tikzpicture}[scale=.3]

    \filldraw[fill=yellow, draw=white] (0,0) rectangle +(1,1);
    \filldraw[fill=yellow, draw=white] (1,0) rectangle +(1,1);
    \filldraw[fill=yellow, draw=white] (1,2) rectangle +(1,1);
    \filldraw[fill=yellow, draw=white] (0,1) rectangle +(1,1);
    \filldraw[fill=yellow, draw=white] (1,1) rectangle +(1,1);

    \draw[gray] (0,0) grid[step=1] (2,3);
\end{tikzpicture}
\quad 
\begin{tikzpicture}[scale=.3]

    \filldraw[fill=green, draw=white] (0,0) rectangle +(1,1);
    \filldraw[fill=green, draw=white] (0,1) rectangle +(1,1);
    \filldraw[fill=green, draw=white] (1,0) rectangle +(1,1);
    \filldraw[fill=green, draw=white] (2,0) rectangle +(1,1);
    \filldraw[fill=green, draw=white] (3,0) rectangle +(1,1);
    \filldraw[fill=green, draw=white] (3,1) rectangle +(1,1);

    \draw[gray] (0,0) grid[step=1] (4,2);
\end{tikzpicture}
\end{center}

  \caption{Infection sets at steps 0, 1, and 2 of an example $\lawC$-layer percolation along with the shapes associated to each diagonal in step 1.} 
  \label{fig:combperc}
\end{figure}

In concatenating the chunks together with overlap, the cumulative contribution to each column of the $S_{j,i,\ell}$ indicators has distribution $\Ber\left(\tfrac{3\lambda/2}{1+3\lambda/2}\right)$. This leads to a coupling between the comb percolation and interval percolation processes which is formalized in the following lemma. 

\begin{lemma}\thlabel{lemma:IncreasedSleepRateCoupling} 
    There exists a coupling between $(\shape{j})_{j \ge 0} \sim \lawC(\lambda)$ and $(\shape{j}')_{j \ge 0} \sim \lawI(3\lambda/2)$ such that $\shape{j}'\subseteq \shape{j}$ for all $j\ge 0$.   
\end{lemma}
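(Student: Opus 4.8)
The plan is to generate the comb shapes $(\shape{j})_{j\ge 0}\sim\lawC(\lambda)$ together with all their underlying randomness and to carve an interval shape $\shape{j}'$ out of each $\shape{j}$. First I would match the bases. Each overlap of consecutive chunks costs one column, so $\shape{j}$ has width $W_j:=\sum_{i=1}^{U_j}(R_{j,i}-1)+1$; since $W_j$ depends only on $U_j$ and the $R_{j,i}$, which are independent of the inherited tooth variables, the widths $(W_j)_{j\ge 0}$ are i.i.d. A short probability-generating-function computation identifies their law: the PGF of $U_j\sim\Geo(1/2)$ is $z\mapsto z/(2-z)$ and that of $R_{j,i}-1\sim\Geo_0(2/3)$ is $z\mapsto 2/(3-z)$; their composition is $z\mapsto 1/(2-z)$, so after the $+1$ one recovers the PGF $z\mapsto z/(2-z)$ of $\Geo(1/2)$, i.e.\ $W_j\sim\Geo(1/2)$. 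As the base of an interval shape is an initial segment of $\N\times\{0\}$ of $\Geo(1/2)$ length, I would simply take $R_j':=W_j$, so the bases of $\shape{j}'$ and $\shape{j}$ coincide.

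Next I would place the height-one cells of $\shape{j}'$. For $c\in\{0,\dots,W_j-1\}$ let $A^j_c$ be the indicator that $\shape{j}$ has a cell strictly above its base at column $c$; the target is a field $(B^j_c)_{c}$ with $B^j_c\le A^j_c$ that is, conditionally on $(W_j)_{j}$, i.i.d.\ $\Ber(q)$ with $q=\tfrac{3\lambda/2}{1+3\lambda/2}$. The basic reason there is enough room is that every column is covered by at least one chunk, which supplies a fresh $\Ber(\lambda/(1+\lambda))$ tooth indicator and a fresh $\Ber(\lambda/(1+\lambda))$ sleep indicator, whence
\[
\P\bigl(A^j_c=1\mid\text{chunk skeleton}\bigr)\ \ge\ 1-\Bigl(1-\tfrac{\lambda}{1+\lambda}\Bigr)^{2}=\frac{\lambda(\lambda+2)}{(1+\lambda)^2}\ \ge\ \frac{3\lambda/2}{1+3\lambda/2}=q,
\]
the last inequality reducing to $2\lambda+1\ge 0$ after clearing denominators; more precisely, the remark preceding the lemma says that, thanks to the one-column overlaps and the $\Geo(2/3)$ chunk widths, the cumulative sleep contribution to each column already carries the $\Ber(q)$ law.

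I would then build the coupling by first revealing $(U_j)_{j}$ and the $(R_{j,i})$ — hence $W_j$ and the combinatorial skeleton of chunk boundaries — but not the tooth and sleep variables, and then running a column-by-column thinning: having exposed $A^j_c$, set $B^j_c=0$ when $A^j_c=0$, and when $A^j_c=1$ set $B^j_c=1$ with the conditional probability that makes $(B^j_c)$ an independent $\Ber(q)$ sequence. The sleep indicators assigned to distinct columns are disjoint (including the extra one each chunk drops on its left boundary), which is what keeps the thinning independent; the tooth indicators only need to be called on to cover a shortfall, and the inheritance $T_{j,1}=T_{j-1,U_{j-1}}$ only affects columns covered by chunk $1$, which also carry fresh sleep indicators and, when $U_j\ge 2$, abut a second chunk. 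Assembling these pieces produces $(\shape{j}')_{j\ge 0}\sim\lawI(3\lambda/2)$ with $\shape{j}'\subseteq\shape{j}$ for all $j$.

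The main obstacle is this last point: making $(B^j_c)_{c}$ \emph{exactly} i.i.d.\ $\Ber(q)$, not merely stochastically larger than an i.i.d.\ field, in the presence of the two dependencies — the tooth indicator $T_{j,i}$ being shared by every column of chunk $i$, and the inheritance chaining chunk $1$ of $\shape{j}$ to the last chunk of $\shape{j-1}$. An independent per-column thinning does not respect these, so the retained cells must be produced in a carefully chosen order, or with shared auxiliary randomness, so that the correlations forced by the common tooth variables cancel; checking that this is feasible is exactly where the $\Geo(2/3)$ chunk width and the factor $3\lambda/2$ in the statement do their work.
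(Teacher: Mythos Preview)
Your computation that $W_j\sim\Geo(1/2)$ is correct and matches the paper. The gap is in the second half: you propose to thin the full height indicator $A^j_c$, which depends on both the $S$ and $T$ variables, and you correctly identify that the tooth variable $T_{j,i}$ being shared across a chunk and the inheritance $T_{j,1}=T_{j-1,U_{j-1}}$ make it unclear how to carry out such a thinning so that the result is exactly i.i.d.\ --- and you leave this as an acknowledged obstacle rather than resolving it.

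The paper's proof simply sidesteps the obstacle: it defines $\shape{j}'$ using \emph{only} the sleep indicators $S_{j,i,\ell}$, discarding the $T_{j,i}$ entirely. The cell $(c,1)$ is put in $\shape{j}'$ iff some $S_{j,i,\ell}$ attached to column $c$ equals $1$; since $S_{j,i,\ell}=1$ already forces height at least $1$ in $\shape{j}$, this gives $\shape{j}'\subseteq\shape{j}$ automatically. What remains is to check that this sleep-only column indicator is i.i.d.\ $\Ber(q)$. The paper does this by reading the chunk construction as a column-wise Markov chain: at each column one adds another chunk with probability $\tfrac23\cdot\tfrac12=\tfrac13$ and otherwise moves on, so the number $C_N$ of chunks covering any column is $\Geo(2/3)$, and the memorylessness of this chain makes these counts independent across columns given $W_j$. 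Averaging $1-(1+\lambda)^{-C_N}$ over $C_N\sim\Geo(2/3)$ gives exactly $q$. Independence across $j$ is then free, since only the $T$'s were ever linked between shapes. You actually cite the marginal-$\Ber(q)$ fact from the remark before the lemma and note the disjointness of the $S$'s across columns, so you already hold both ingredients; the missing realisation is that together they are the entire coupling, and the tooth indicators --- along with the thinning and its complications --- can be thrown away.
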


\begin{proof}
    Take a sequence $(\shape{j})_{j \ge 0} \sim \lawC(\lambda)$, and denote the width of $\shape{j}$ by $W_j := 1 + \sum_{i=1}^{U_j} R_{j,i}$, meaning $W_j - 1$ is its rightmost column. Let $\shape{j}'$ consist of its \emph{base} --- the horizontal strip of cells from $(0,0)$ to $(W_j-1, 0)$ --- along with the cells $(i, 1)$ for each $1\le i\le W_j-1$ such that at least one of the $S_{j,i,\ell}$ indicators associated to that column $i$ equals $1$. Clearly $\shape{j}'\subseteq \shape{j}$, and so we must show that $(\shape{j}')_{j \ge 0}$ is distributed as $\lawI(3\lambda/2)$.   

    First, $W_j$ indeed has distribution $\Geo(1/2)$, being one more than the sum of $\Geo(1/2)$ different independent $\Geo_0(2/3)$ random variables. 
    Next, we may think of building $\shape{j}$ column-wise, where at each column we have a chance of starting a new chunk and staying on the current column or continuing our existing chunk into the next column. Because the width of each chunk has distribution $\Geo(2/3)$ and the number of chunks has distribution $\Geo(1/2)$, the probability that we start a new chunk at each iteration of this process is $1/3$. As such the number of chunks associated to each column is distributed as $\Geo(2/3)$, and for each of these chunks we have an associated $S_{j,i, \ell}$ indicator which is one with probability $\lambda/(1+\lambda)$. If we denote the number of chunks associated to this column by $C_N$, the probability that at least one of these indicators equals $1$ is 
    \begin{align*}
        \sum_{n=1}^\infty \mathbb{P}(C_N=n)\left(1-\frac{1}{(1+\lambda)^{n}}\right) &= \frac{2}{3}\sum_{n=1}^\infty \frac{1}{3^{n-1}}\left(1-\frac{1}{(1+\lambda)^{n}}\right)
        =\frac{3\lambda/2}{1+3\lambda/2},
    \end{align*}
    and so each cell vertically adjacent to the base of $\shape{j}'$ is independently included in $\shape{j}'$ with probability $\tfrac{3\lambda/2}{1+3\lambda/2}$, which is identical to the inclusion of such a cell in a shape in a sequence generated from $\lawI(3\lambda/2)$. 
\end{proof}

\subsection{Comb percolation with \texorpdfstring{$d=3$}{d=3}} \label{sec:3d}
The stable odometers on the comb may also be represented by a layer percolation process on $\mathbb{N}^3\times \mathbb{N}$ with a law $\lawCthreeD$. The new law is identical to that of $\lawC$ except for when sampling shapes, the contribution of the $T_i$ and $S_{i,\ell}$ indicators are separated into their own dimensions. More explicitly, chunks are built from a base strip of cells from $(0,0,0)$ to $(R_i, 0,0)$, and for each cell in this strip we also include the cells $(\ell, 0, T_i)$, $(\ell, S_{i,\ell}, 0)$, and $(\ell, S_{i,\ell}, T_i)$. Chunks are then concatenated with one cell of overlap in the $r$ coordinate to form shapes. See Figure~\ref{fig:3dcombplot} for a simulation of the infection set.

We believe that $\lawCthreeD$-layer percolation captures the stable odometers on the comb (Section \ref{sec:generalstableodometers}) in the same way as comb percolation, with the added benefit that the coordinates $s_1, s_2$ represent the number of particles left sleeping on the spine and teeth respectively, and hence information about these individual densities is retained. In the following arguments however we will consider comb percolation in order to better draw connections to interval percolation and to utilize the tools from \cite{hoffman2024density, hoffman2024hockey}. 

\begin{figure}

    \centering
    \includegraphics[scale = 0.25]{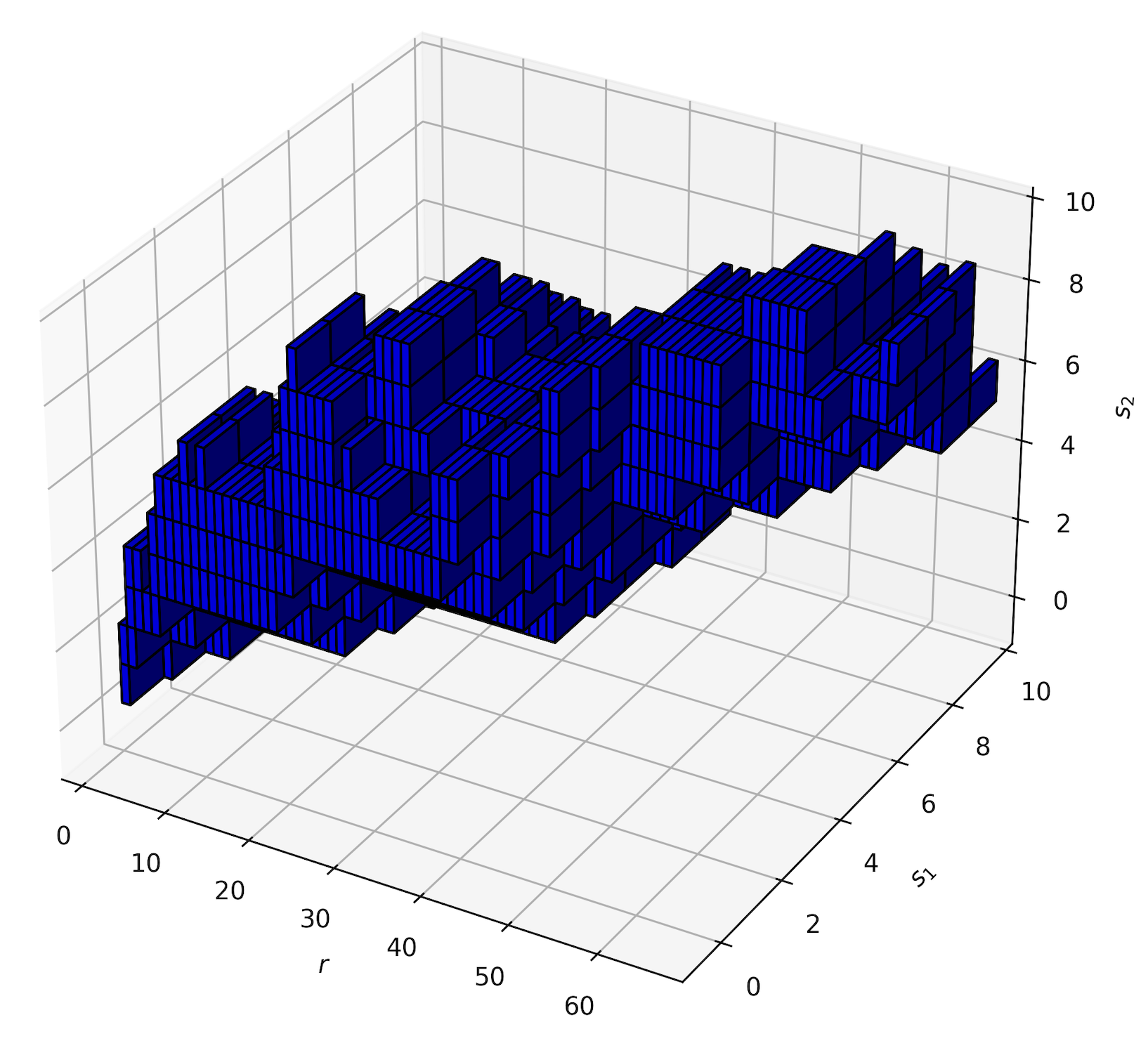}
  \caption{A simulated infection set at step 15 of $\lawCthreeD$-layer percolation with sleep rate $\lambda=0.5$.}
  \label{fig:3dcombplot}
\end{figure}

\section{Odometers}  \label{sec:od}
It is standard to describe ARW using the \textit{sitewise representation}, where rather than particles moving according to their own internal random walks, particles jump according to predetermined random instruction stacks associated to each site. Throughout the rest of this paper spine vertices will be labeled by their location in $\ii{0,n+1}$, and tooth vertices will be labeled as the prime of the spine vertex they attach to i.e., $1'$ is the tooth connected to $1$.

\subsection{Basics} With $\combinterior{n}$ referring to the subgraph of $\comb{n}$ consisting of the non-sink vertices, we associate to each $v \in \combinterior{n}$ a double-sided stack of instructions $\instr_v=\left(\instr_v\left(k\right)\right)_{k\in\mathbb{Z}}$. For vertices $v$ on the spine, $\instr_v(k)\in \{\Sleep, \Left, \Up, \Right\}$, where $\Sleep$ tells a particle to fall asleep and $\Left, \Up, \Right$ tell the particle to jump to $v-1, v', $ and $v+1$ respectively. For $v'$ on the teeth, $\instr_{v'}(k)\in \{\Sleep, \Down\}$ where $\Down$ tells a particle to jump to its neighbor $v$ on the spine. We require $\instr_v(0)=\Left$ for $v$ on the spine and $\instr_{v'}(0)=\Down$ for $v'$ on the teeth. All instructions $\instr_v(k)$ for $k \neq 0$ are independent. The probability of a $\Sleep$ instruction is $\lambda/(1+\lambda)$ at every vertex $v$, and otherwise a jump instruction is sampled uniformly from $\{\Left, \Up, \Right\}$ or $\{\Down\}$ depending on whether $v$ is on the spine or teeth.

An \textit{odometer} is any function $u\colon \comb{n} \to \mathbb{Z}$ that is zero at the sinks. We think of $u$ as counting the number of instructions used at each site.
Negative odometer values, a somewhat confusing notion that is needed to regularize comb percolation, can be thought of as representing the execution of instructions on the negative-index portion of the instruction list. Such instructions have the reverse of their normal effect; for example, executing $\instr_v(k) = \Left$ for some $k \le 0$ moves a particle from $v-1$ to $v$. Denote the number of $\Left$ instructions used at $v \in \combinterior{n}$ by
\[
\lt{u}{v} := 
\begin{cases}
\phantom{-}\displaystyle\sum\nolimits_{1 \le i \le u(v)} \mathbf{1}\{\instr_v(i) = \Left\}, & \text{if } u(v) \ge 0, \\[10pt]
-\displaystyle\sum\nolimits_{u(v) \le i \le 0} \mathbf{1}\{\instr_v(i) = \Left\}, & \text{if } u(v) < 0,
\end{cases}
\]
and for convenience let $\lt{u}{v} := 0$ at the sinks. Similarly define $\rt{u}{v}$, $\up{u}{v}$, and $\down{u}{v}$ to denote the respective number of $\Right$, $\Up$, and $\Down$ instructions used under the odometer at $v$.

A \textit{configuration} is a function $\sigma\colon \comb{n}\to \mathbb{N}\cup \{\s\}$ for which $\sigma(v)$ represents the initial number and type of particles at site $v$. If $\sigma(v)=\s$ we think of the site as containing a single sleeping particle, and define $|\s|=1$.
\textit{Stable} configurations satisfy $\sigma(v)\in \{0, \s\}$ for all $v \in \combinterior{n}$. 
A stabilizing procedure consists of sequentially selecting an unstable site $v$, i.e.\ a site $v \in \combinterior{n}$ with $\sigma(v)\not\in \{0, \s\}$, and executing the next (positively-indexed) instruction there until a stable configuration is achieved. The oft-used \textit{abelian property} of ARW implies that every stabilizing procedure results in the same odometer and stable configuration \cite{rolla2020activated}. Call this unique odometer the \textit{true odometer}. 

\subsection{Stable odometers} \label{sec:generalstableodometers}
\textit{Stable odometers} are a subclass of odometers that satisfy certain \emph{mass balance equations} for particle flow into and out of sites.
Given any odometer $u$, initial configuration $\sigma$, and instructions $\instr = (\instr_v)_{v \in \comb{n}}$, we define the net inflow and outflow of particles at each non-sink site under $u$. For $v'$ on the teeth, let 
\begin{align*}
    \In{v'}{u} := \up{u}{v} \qquad \text{ and } \qquad  \Out{v'}{u}:= \down{u}{v'}
\end{align*}
and similarly for $v$ on the spine, let
\begin{align*}
    \In{v}{u}:=  \rt{u}{v-1} + \lt{u}{v+1} + \down{u}{v'}\qquad \text{ and } \qquad  \Out{v}{u}:=\rt{u}{v}+\lt{u}{v}+\up{u}{v}.
\end{align*}
We call an odometer $u$ \textit{stable} under $\sigma,\instr$ if for all $v\in \combinterior{n}$,
\begin{enumerate}[(a)]
    \item $h(v):= |\sigma(v)|+\In{v}{u}-\Out{v}{u}\in \{0,1\}$
    \item $h(v)=1$ if and only if $\instr_v(u(v))=\Sleep$.
\end{enumerate}
These conditions require that (a) the flow into and out of $v$ be balanced, and (b) when a particle is left behind, the last instruction used is $\Sleep$. The true odometer is an example of a stable odometer, but there are many others. Note that these balance properties are still satisfied by stable odometers that take negative values, keeping in mind that negative-index instructions have the reverse effect of positive-index instructions. We will consider certain subclasses of stable odometers satisfying a flow condition to the left sink. 

\begin{define} For $f_0 \in \Z$, define $\eosos{n}(\instr, \sigma,f_0)$ to be the set of all {odometers} $u$ on $\comb{n}$ that are stable under $\instr,\sigma$ and have a net flow of $f_0$ from site~$0$ to site~$1$ i.e., $-\lt{u}{1}=f_0$.
\end{define}

A crucial bridge between infection paths in comb percolation and stable odometers is the \emph{minimal odometer} $\m$. It is the smallest odometer in $\eosos{n}(\instr,\sigma,f_0)$ in the sense that $\m(v) \leq u(v)$ for all $u \in \eosos{n}(\instr,\sigma,f_0)$ and $v \in \comb{n}$.  Note that the minimal odometer typically takes some negative values, so is almost never the true odometer, which must be nonnegative. Following the ideas of \cite{hoffman2024density}, we construct it inductively as follows.

\begin{define}[Minimal odometer $\m \in \eosos{n}(\instr,\sigma,f_0)$] \thlabel{def:m}
Set $\m(0)=0=\m(n+1)$. Now suppose that $\m(v-1)$ has already been defined for $v$ on the spine. We then define
$\m(v)$ to be the minimum integer such that $\lt{\m}{v} = \rt{\m}{v-1} - f_0 - \sum_{i=1}^{v-1}(\abs{\sigma(i)} + \abs{\sigma{(i')}}).$ For $v'$, we define $\m(v')$ to be the minimum integer such that $\down{\m}{v'}=\up{\m}{v}+|\sigma(v')|$. 
\end{define}

The odometer $\m$ as defined above can be seen to be minimal in $\eosos{n}(\instr, \sigma,f_0)$ by a similar proof to \cite[Lemma 4.3]{hoffman2024density}.

\section{Comb percolation and odometers} \label{sec:mulpo}

In this section we describe how comb percolation can be sampled using instructions and the minimal odometer. In addition, we describe a correspondence between infection paths and odometers.

\subsection{Coupling infection sets and odometers}\label{sec:cpcoupling} 
As described in Section \ref{sec:cpdef} comb percolation is characterized by the sequence of shapes which are determined by the values $U_j, R_{j, i}, T_{j, i},$ and $S_{j,i,\ell}$. Take some initial configuration $\sigma$ on $\comb{n}$, instructions $\instr$, and flow $f_0$. Let $\m$ be the minimal odometer of $\eosos{n}(\instr,\sigma,f_0)$. For fixed $v\in \ii{1, n}$ consider the instruction lists 
\begin{align}
    &\instr_v\bigl(\m(v)\bigr),\, \instr_v\bigl(\m(v)+1\bigr),\,\instr_v\bigl(\m(v)+2\bigr),\ldots,\\
    &\instr_{v'}\bigl(\m(v')\bigr),\, \instr_{v'}\bigl(\m(v')+1\bigr),\,\instr_{v'}\bigl(\m(v')+2\bigr),\ldots,
\end{align}
which always start with $\Left$ and $\Down$ respectively. We may partition the spine instruction stack into slots $0,1,2,\ldots$ where each slot begins with a $\Left$ instruction and includes the string of instructions before the next $\Left$ (see the example at the end of the section). We begin the correspondence by setting $U_j$ for $j \ge 0$ to be one plus the number of $\Up$ instructions in slot $j$. This further divides slot $j$ into $U_j$ chunks, each beginning with an $\Up$ or $\Left$ instruction and ending before the next $\Up$ or $\Left$. We define $R_{j,i}$ as one more than the number of $\Right$ instructions within the $i^\text{th}$ chunk. 

We pair the sequence of $\Up$ instructions in the spine stack with the $\Down$s in the tooth stack (excluding the first $\Down$ at the start of the list), and then partition the tooth instruction stack into a corresponding set of slots and chunks, with an $\Up$ instruction going into the chunk corresponding to its pair's. All $\Sleep$ instructions in the tooth stack go into the same chunk as the $\Down$ instruction immediately to the right. Note that the first chunk of a tooth slot is always empty, since the corresponding spine slot has a $\Left$ instruction not a $\Down$. For $2\le i\le U_j$, we define $T_{j,i}$ to be the indicator of the presence of a $\Sleep$ instruction directly before the corresponding $\Down$ instruction in the tooth chunk. For $j \neq 0$ define $T_{j, 1} := T_{j-1, U_{j-1}}$, and define $T_{0, 1}$ to be the indicator of the presence of a $\Sleep$ instruction directly before the $\Down$ instruction $\instr_{v'}(\m(v'))$. 
Within the $i^\text{th}$ spine chunk there are a further $R_{j,i}+1$ regions between any successive jump instructions. For the $\ell^\text{th}$ region we define $S_{j,i,\ell}$ to be the indicator of the presence of a $\Sleep$ instruction within this region.  

Carrying this out for each $v, v'$ gives the \textit{coupled construction} $\mathcal I(\instr, \sigma, f_0)$  of comb percolation that is identical in law to the infection set $\mathcal I$ of $\lawC$-layer percolation for steps $0$ to $n$. The following correspondence $\Phi$ takes $u\in\eosos{n}(\instr,\sigma,f_0)$ and maps it to the sequence of cells
 $\bigl((r_v,s_v)_v,\,0\leq v\leq n\bigr)$ in $\mathcal I (\instr, \sigma, f_0)$ given by
\begin{align}
  r_v &= \rt{u}{v} - \rt{\m}{v},\label{eq:cor}\\
  s_v &= \sum_{i=1}^v\1\{\instr_i(u(i))=\Sleep \}+\1\{\instr_{i'}(u(i'))=\Sleep \}.
\end{align}

\begin{prop}\thlabel{prop:surj}
The map $\Phi$ is surjective.   
\end{prop}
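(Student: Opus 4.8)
The plan is to construct an explicit right inverse to $\Phi$. Given an infection path $P=\bigl((r_v,s_v)_v\bigr)_{0\le v\le n}$ from $\mathbf 0$ to step $n$, I will build an odometer $u\in\eosos{n}(\instr,\sigma,f_0)$ with $\Phi(u)=P$, proceeding one spine vertex at a time and then filling in the teeth. The first step is bookkeeping: record the identities any preimage is forced to satisfy. Summing the balance equation (a) over spine vertices $1,\dots,v$, using the tooth balance to write $\down{u}{i'}=\abs{\sigma(i')}+\up{u}{i}-h(i')$, telescoping the $\rt{u}{\cdot}$ and $\lt{u}{\cdot}$ sums, and using (b) to identify $\sum_{i\le v}\bigl(h(i)+h(i')\bigr)$ with $s_v$, gives
\[
  \rt{u}{v}-\lt{u}{v+1}\;=\;f_0+\sum_{i=1}^{v}\bigl(\abs{\sigma(i)}+\abs{\sigma(i')}\bigr)-s_v\;=:\;F_v
\]
for all $0\le v\le n$; the case $v=0$ is exactly the flow condition $-\lt{u}{1}=f_0$. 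Since $\eqref{eq:cor}$ demands $\rt{u}{v}=\rt{\m}{v}+r_v$, this also pins down $\lt{u}{v+1}=\rt{\m}{v}+r_v-F_v$, so a preimage is determined by $(r_v),(s_v)$ together with a binary choice at each spine and tooth vertex of whether its last executed instruction is $\Sleep$.

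The second --- and main --- step is to realize the prescribed instruction counts simultaneously, which is where the definition of $\lawC$-layer percolation and the coupled construction of Section~\ref{sec:cpcoupling} enter. Unwinding that construction, the shape $\shape{j}^{v-1}$ read from the stack $\instr_v(\m(v)),\instr_v(\m(v)+1),\dots$ is organized so that column $r'$ of slot $j$ sits just after the $r'$-th $\Right$ instruction in that slot (with $\ell^{v-1}_j$ counting the $\Right$s of the earlier slots), and so that a cell of height $s'$ over that column is present exactly when $s'$ additional $\Sleep$ instructions are available at the matching position of the spine and tooth stacks --- this is precisely the content of the $S_{j,i,\ell}$ and inherited $T_{j,i}$ indicators. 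Thus the hypothesis that $(r_v,s_v)_v$ is infected by $(r_{v-1},s_{v-1})_{v-1}$, with diagonal $j_{v-1}=r_{v-1}+s_{v-1}$, translates into the existence of an index $u(v)$ in $\instr_v$ realizing the required values of $\rt{u}{v}$ and $\lt{u}{v}$, with $\instr_v(u(v))=\Sleep$ iff the spine portion of the increment $s_v-s_{v-1}$ calls for it. I would then let $u(v')$ be the least index with $\down{u}{v'}=\abs{\sigma(v')}+\up{u}{v}-h(v')$, where $h(v')\in\{0,1\}$ is the tooth portion of $s_v-s_{v-1}$; the trailing tooth $\Sleep$ needed when $h(v')=1$ is again guaranteed by the $T$-indicator carried by the infected cell (possibly forcing $u(v')<0$, which is what the negative-index instructions are for). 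Setting $u(0)=u(n+1)=0$ completes the definition of $u$ on $\comb{n}$.

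Finally I would verify $u\in\eosos{n}(\instr,\sigma,f_0)$ and $\Phi(u)=P$. Conditions (a) and (b) at the teeth hold by the choice of $u(v')$. Granting the tooth balance and condition (b), a short computation with the $F_v$ identities shows that the system $\{\rt{u}{v}-\lt{u}{v+1}=F_v\}_{0\le v\le n}$ --- which $u$ satisfies by construction --- forces the balance expression at each spine vertex to equal $\1\{\instr_v(u(v))=\Sleep\}$, so spine (a) and (b) hold; and $-\lt{u}{1}=f_0$ is the $v=0$ instance. That $\Phi(u)=P$ is then immediate from $\eqref{eq:cor}$ and the definition of $s_v$. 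The main obstacle is the second step: matching the combinatorial notion of infection in $\lawC$-layer percolation --- translations of chunked shapes, the overlap bookkeeping hidden in $\ell^{v-1}_j$, and especially the inheritance $T_{j,1}=T_{j-1,U_{j-1}}$, which links the choices at consecutive spine vertices --- against the instruction-stack description of $u(v)$ and $u(v')$, and checking case by case (spine versus tooth, nonnegative versus negative odometer value, increment $0$, $1$, or $2$) that an infected cell really does correspond to a legal choice of last executed instruction. I expect this is where the argument from \cite{hoffman2024density} requires the most adaptation.
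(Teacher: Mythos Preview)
Your outline is essentially the paper's approach: build a preimage odometer one spine vertex at a time, using the coupling of Section~\ref{sec:cpcoupling} to locate an index in each stack realizing the prescribed $\Left$/$\Right$ counts and the required final $\Sleep$, and then fill in the tooth. The paper's sketch also proceeds inductively, defines the candidate index $i_v$ exactly as you suggest (minimal so that $r_{v-1}+s_{v-1}$ $\Left$s and $r_v$ $\Right$s have occurred past $\m(v)$), and then branches on $s_v-s_{v-1}\in\{0,1,2\}$.

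One point where your exposition is looser than the paper's: you speak of ``the spine portion'' and ``the tooth portion'' of the increment $s_v-s_{v-1}$ as if the infection path determines this split. In the two-dimensional comb percolation it does not --- a cell at height~$1$ in column $(j,i,\ell)$ is present whenever $T_{j,i}+S_{j,i,\ell}\ge 1$, with no record of which indicator fired. The paper resolves this by looking at the actual instruction stacks: in the case $s_v-s_{v-1}=1$ it first checks whether a spine $\Sleep$ is available just past $i_v$, and only if not appeals to the coupling to guarantee a tooth $\Sleep$ before the paired $\Down$. So the decomposition is a consequence of the instructions, not an input, and your construction of $u(v')$ should be phrased accordingly. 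You flag exactly this case analysis as ``the main obstacle,'' which is accurate; once it is written out your argument and the paper's coincide.
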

The proof is similar to that of \cite[Proposition 4.6]{hoffman2024density}. We provide a sketch in the appendix.
  
\subsection{Example}
We provide an example demonstrating how to generate shapes in step $k$ of comb percolation. Suppose the instructions at sites $k'$ and $k$, respectively, are:
\begin{align*}
&
\textcolor{black}{
\underbrace{\cdots \mathtt{\mid \: \tikzdashedmid D \tikzdashedmid SD\mid \:\: \mid \: \tikzdashedmid SD \tikzdashedmid SSD \mid \: \tikzdashedmid D \tikzdashedmid SD}}_{\texttt{ignore}}
\mid
\underbrace{\: \tikzdashedmid \texttt{SD}}_{\shape{0}} \mid
\underbrace{\: \tikzdashedmid \texttt{D}}_{\shape{1}} \mid
\underbrace{\: \tikzdashedmid \texttt{D} \tikzdashedmid \texttt{SD}}_{\shape{2}} \mid
\cdots
}
\\
&
\textcolor{black}{
\underbrace{\cdots \mathtt{\mid LR\tikzdashedmid US \tikzdashedmid U\mid LSRSS\mid L \tikzdashedmid U \tikzdashedmid URSR\mid L\tikzdashedmid USRS\tikzdashedmid U}}_{\texttt{ignore}}
\mid
\underbrace{\texttt{LSR}\tikzdashedmid \texttt{URRS}}_{\shape{0}} \mid
\underbrace{\texttt{LSSRRS} \tikzdashedmid \texttt{UR}}_{\shape{1}} \mid
\underbrace{\texttt{LS} \tikzdashedmid \texttt{USSS} \tikzdashedmid \texttt{USR}}_{\shape{2}} \mid
\cdots
}
\end{align*}
where we have ignored instructions as dictated by the minimal odometer. Vertical bars indicate slots associated to $\Left$ instructions in the spine stack, and dashed vertical bars further indicate the chunks determined by $\Up$ instructions between successive $\Left$ instructions.

Given these instructions, $\shape{0}^k$ has $U_0=2$ chunks. The first chunk, shown in red below, has width $R_{0,1} = 2$. Its base is height two since $T_{0,1} = 1$, inherited from the previous $\Down$ instruction and $1$ because it is preceded by a $\Sleep$. The first jump in the spine chunk is followed by a $\Sleep$, while the second is not, so $S_{0,1,1}=1$ and $S_{0,1,2}=0$, determining which columns are one extra in height.

The second chunk, in orange below, has width $R_{0,2}=3$. It also has base height two from the presence of a $\Sleep$ in the tooth chunk (and thus so will the first chunk of the next shape, with the inheritance persisting until there is a second chunk). The extra-high columns are determined by $S_{0,1,1}=0,S_{0,1,2}=0,S_{0,1,3}=1$ based on the presence of $\Sleep$ instructions in the spine chunk.

The chunks are 
\begin{center}
    \begin{tikzpicture}[scale=.3]
    \fillcellwithoffset{red}{(0,0)}  
    \fillcellwithoffset{red}{(0,1)}  
    \fillcellwithoffset{red}{(0,2)}
    \fillcellwithoffset{red}{(1,0)}  
    \fillcellwithoffset{red}{(1,1)}  

    \draw[gray] (0,0) grid[step=1] (4,3);
\end{tikzpicture}
\quad 
\begin{tikzpicture}[scale=.3]

    \fillcellwithoffset{orange}{(0,0)}  
    \fillcellwithoffset{orange}{(0,1)}  
    \fillcellwithoffset{orange}{(1,0)}  
    \fillcellwithoffset{orange}{(1,1)}
    \fillcellwithoffset{orange}{(2,0)}  
    \fillcellwithoffset{orange}{(2,1)}
    \fillcellwithoffset{orange}{(2,2)}
    \draw[gray] (0,0) grid[step=1] (4,3);
\end{tikzpicture}

\end{center}
and so $\shape{0}^k$ is 
\begin{center}
    \begin{tikzpicture}[scale=.3]
    \fillcellwithoffset{red}{(0,0)}  
    \fillcellwithoffset{red}{(0,1)}  
    \fillcellwithoffset{red}{(0,2)}
    \fillcellwithoffset{red}{(1,0)}  
    \fillcellwithoffset{red}{(1,1)}  

    \fillcellwithoffsetdiag{orange}{(1,0)}  
    \fillcellwithoffsetdiag{orange}{(1,1)}  
    \fillcellwithoffset{orange}{(2,0)}  
    \fillcellwithoffset{orange}{(2,1)}
    \fillcellwithoffset{orange}{(3,0)}  
    \fillcellwithoffset{orange}{(3,1)}
    \fillcellwithoffset{orange}{(3,2)}
    \draw[gray] (0,0) grid[step=1] (4,3);
\end{tikzpicture}

\end{center} 
Similarly, $\shape{1}^k$ and $\shape{2}^k$ are
\begin{center}
    \begin{tikzpicture}[scale=.3]
    \fillcellwithoffset{yellow}{(0,0)}  
    \fillcellwithoffset{yellow}{(0,1)}  
    \fillcellwithoffset{yellow}{(0,2)}  
    \fillcellwithoffset{yellow}{(1,0)}  
    \fillcellwithoffset{yellow}{(1,1)}  
    \fillcellwithoffset{yellow}{(2,0)}  
    \fillcellwithoffset{yellow}{(2,1)}  
    \fillcellwithoffset{yellow}{(2,2)}
    \fillcellwithoffsetdiag{green}{(2,0)}  
    \fillcellwithoffset{green}{(3,0)}
    
    \draw[gray] (0,0) grid[step=1] (4,3);
\end{tikzpicture}
\quad 
\begin{tikzpicture}[scale=.3]

    \fillcellwithoffset{blue}{(0,0)}
    \fillcellwithoffset{blue}{(0,1)}
    \fillcellwithoffsetdiag{purple}{(0,0)}
    \fillcellwithoffsetdiag{purple}{(0,1)}
    \fillcellwithoffsetnegdiag{lightgray}{(0,0)}
    \fillcellwithoffsetnegdiag{lightgray}{(0,1)}
    \fillcellwithoffset{lightgray}{(0,2)}
    \fillcellwithoffset{lightgray}{(1,0)}
    \fillcellwithoffset{lightgray}{(1,1)}
    \draw[gray] (0,0) grid[step=1] (4,3);
\end{tikzpicture}
\end{center}

\section{Proof of \texorpdfstring{\thref{thm:cp}}{Theorem~\ref{thm:cp}}} \label{sec:cp} 
 This proof relies on several results from \cite{hoffman2024density} that extend to the comb. Their statements  and explanations for their continued applicability are in the appendix. 
Let $\mathbf 1_{\combinterior{n}}$ be the configuration with one active particle on each non-sink vertex, whose stabilization has the stationary distribution \cite{levine2021exact}. Consider the net flow $f_0= -\lfloor(1-\f \rho 2 ) n\rfloor$ for arbitrary $0<\rho < \critstarc$. Let $\mathcal O_n = \eosos{n}(\instr,\mathbf 1_{\combinterior{n}}, f_0)$ with minimal odometer $\m$. We will say that a sequence of events $A_n$ occurs \textit{with overwhelming probability} (w.o.p.) if there are constants $c,C>0$ depending only on $\rho$ such that $\P(A_n) \geq 1-Ce^{-cn}$ for all large $n$.

Let $k$ be such that $\rho_{\mathtt{COMB}}^{(k)} > \rho$. Let $u \in \mathcal O_n$ be an odometer corresponding to the $k$-greedy infection path $((r_t,s_t)_t)_{0 \leq t \leq n}$ in $\mathcal I(\instr ,\sigma,f_0)$ under the correspondence in \thref{prop:surj}. Proposition \ref{prop:greedy-path-size} gives estimates on the magnitude of $r_n$ and Proposition \ref{prop:min-odometer-concentration} gives estimates on the magnitude of $\rt{\m}{n}$. Together these estimates imply that $r_n \geq \rt{\m}{n}$ w.o.p. Using the correspondence at \eqref{eq:cor}, we deduce that $u(n) \geq 0$ w.o.p. Also, $u(1) \ge 0$ necessarily since $f_0 \le 0$. Combining Lemma \ref{lem:flow} for the special case $\sigma = \mathbf 1_{\combinterior{n}}$ and Lemma \ref{lem:uniflow}, we deduce that as long as $u(1)$ and $u(n)$ are nonnegative, then so are $u(v)$ for all $v$ along the spine. This in turn implies that $u(v')\geq 0$ for all teeth vertices as well.

As $u$ is a nonnegative stable odometer, we may then apply the least-action principle from Lemma \ref{lem:dual.lap} to deduce that the true odometer that stabilizes $\sigma$ uses fewer instructions at site $1$. In particular, it sends no more than $|f_0|$ particles to the sink at $0$ w.o.p. By symmetry, the same argument could be applied to deduce that the true odometer sends no more than $|f_0|$ particles to the sink at $n+1$ w.o.p., and hence, the true odometer leaves behind $S_n \geq 2n - 2|f_0| \geq  \rho n$ particles w.o.p. It follows that $\liminf \E[S_n]/n \geq \rho$, which implies our result as $\rho$ was arbitrary.

\section{Proof of Theorem \ref{thm:bound}} \label{sec:bound}

\subsection{Lower bound}
We describe an iterative scheme to build an infinite infection path $\gamma = \left(\mathbf{c}_k\right)_{k \ge 0}$ starting from $\mathbf{0}$ in comb percolation, with $\mathbf{c}_k = (r_k, s_k)_k$. The path will be the concatenation of paths $\gamma_0, \gamma_1, \ldots$ that form a renewal process in the following sense. Letting $K_0 < K_1 < \ldots$ be such that $\gamma_n$ starts at cell $\mathbf{c}_{K_n}$ and ends at $\mathbf{c}_{K_{n+1}}$, the $(K_{n+1} - K_n, s_{K_{n+1}} - s_{K_n})_{n \ge 0}$ will be i.i.d., thus forming what is termed a renewal process with reward $s_n$. 

Take $n \ge 0$, and suppose that we have constructed $\gamma_0, \ldots, \gamma_{n-1}$. The cell $\mathbf{c}_{K_n}$ is known then, as either the final cell of the previous path, or $\mathbf{0}$ when $n = 0$. To form $\gamma_n$, we mostly restrict to the sub-shapes corresponding to interval percolation with parameter $3\lambda/2$ given by \thref{lemma:IncreasedSleepRateCoupling}, for which any cell infected in this coupled subprocess is also infected in the complete comb percolation. For our first attempt at forming $\gamma_n$, select the $1$-greedy path from those cells infected by $\mathbf{c}_{K_n}$ in the restricted subprocess. The attempt succeeds if the $T$-indicator variable $T_{j,i}$ associated to the final infection of this path (see Section~\ref{sec:cpdef}) is $1$. If not, then we select the $2$-greedy path and check its final infection's $T$-indicator, stopping at the first $K$ where the $T$-indicator is $1$. We set $\gamma_n$ to be this $K$-greedy path with the modification that the terminal cell is one row higher, at the cell corresponding to the positive $T$-indicator. We then have $K_{n+1} - K_n = K$.

Since the $T$-indicators are independent between steps and are independent of the subshapes being used, we have $K_{n+1} - K_n \sim \Geo(\lambda/(1+\lambda))$ and $s_{K_{n+1}} - s_{K_n}$ conditional on $K_{n+1} - K_n=k$ is distributed as $X_k + 1$, where $X_k$ is the height of the $k$-greedy path of interval percolation. Also, the $(K_{n+1} - K_n, s_{K_{n+1}} - s_{K_n})_{n \ge 0}$ are indeed independent, again by the independence of infections across different steps. Therefore, by the theory of renewal reward processes (see for instance \cite[Theorem 3.6.1]{ross1996stochastic} and the ensuing remark),
 $$\lim_{n \to \infty} \E\left[\f{s_n}n\right] = \f{\E[s_{K_1}]}{\E[K_1]} = \f \lambda { 1 + \lambda} \left( 1 + \E\left[ X_{K_1}\right]\right) = \f \lambda { 1 + \lambda} \left( 1 + \E\left[K_1 \rho_{\mathtt{INTERVAL}}^{(K_1)}\left(3\lambda/2\right)\right]\right).$$
Since our construction uses subshapes of comb percolation, $s_n$ is a lower bound for the growth rate of comb percolation, so the above quantity is bounded by $\critstarc(\lambda)$, which by \thref{thm:cp} is bounded by $\critcomb{\lambda}$.

\subsection{Upper bound}
Again, consider the stabilization of the one-per-site configuration. Call the particles that start on the teeth \textit{teeth-particles} and those that start on the spine \textit{spine-particles}. Using the abelian property, in the first step of the stabilization we may leave the teeth-particles in place and only topple spine-particles until every spine-particle is either asleep on the spine or at a sink. When a particle is sent up to a tooth it is immediately sent down, so $\Up$ instructions have no net effect on the configuration. The activity on the spine then in the first step couples with ARW on the interval with sleep rate $\lambda$ and jump rate $2/3$, or scaling up, jump rate $1$ and sleep rate $\lambda'=3\lambda/2$ (this factor of $3/2$ shows up for similar reasons as in \thref{lemma:IncreasedSleepRateCoupling}). After step one then, at each tooth there is an active particle, and the configuration restricted to the spine, $\tau_n$, exactly follows the stationary distribution of driven-dissipative ARW on $\ii{1,n}$ with parameter $\lambda'$. Stabilizing further can only remove more particles, so \begin{align}
    S_n \le n + |\tau_n|,\label{eq:particles-dont-come-back}
\end{align} where absolute value denotes the total number of particles of a configuration. Then since
\begin{align}
    \lim_{n \to \infty}\E\frac{\left|\tau_n\right|}{n} = \critinterval(\lambda'), \label{eq:density-conj}
\end{align} we conclude that  $$\critcomb{\lambda} = \liminf_{n\to\infty} \frac{\mathbf{E}[S_n]}{n} \leq 1 + \critinterval(\lambda').$$

\section*{Appendix}
\label{sec:appendix}
We state results from \cite{hoffman2024density} and then explain why each one or a close analog continues to hold for comb percolation. Note that their leftmost non-sink vertex is $0$ not $1$, with the left sink at $-1$, so the theorem statements below need to be slightly shifted to fit our perspective.

\begin{prop}[Proposition~5.16] \thlabel{prop:greedy-path-size}
    Let $(0,0)_0=(r_0,s_0)_0\to(r_1,s_1)_1\to\cdots$ be the $k$-greedy infection path in interval percolation. There exist constants $C,c$ depending only on $\lambda$ and $k$ such that for all $n$ and all $t\geq 5$, \begin{align} \P\Biggl(\abs[\bigg]{r_n-\frac{\greedyInterval{k} n^2}{2}}\geq tn^{3/2}\Biggr) &\leq C\exp\biggl(-\frac{ct^2}{1+\frac{t}{\sqrt{n}}}\biggr)\label{eq:greedy.path.u},\\\intertext{and} \P\biggl(\abs[\Big]{s_n-\greedyInterval{k} n}\geq t\sqrt{n}\biggr)&\leq 2e^{-ct^2}.\label{eq:greedy.path.s}
   \end{align} 
\end{prop}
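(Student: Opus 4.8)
The plan is to exploit the block structure of the extended $k$-greedy path: split the steps into consecutive blocks of length $k$, and note that the $k$-greedy infection path is obtained by concatenating, within each block, a uniformly chosen infection path of maximal height increment over that block. Because the shapes in distinct rows are independent and, within a row, i.i.d.\ across diagonals, the height increments $X^{(m)} := s_{(m+1)k} - s_{mk}$ are i.i.d.\ copies of $X_k$, each supported on $\{0,1,\dots,k\}$ (in interval percolation the height rises by at most one per step) with mean $k\greedyInterval{k}$. For the horizontal coordinate, writing $j_t = r_t + s_t$ for the diagonal occupied at step $t$, one has the exact recursion $r_{t+1} = \ell^{\,t}_{j_t} + r'_t$, where $\ell^{\,t}_{j} = \sum_{i<j}(R^t_i - 1)$ is a sum of $j$ independent widths from row $t$, each $R^t_i - 1 \sim \Geo_0(1/2)$ of mean $1$, and $r'_t \le R^t_{j_t} - 1$ is the column offset of the chosen cell. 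Rearranging gives the master identity $r_n = \sum_{t<n} s_t + \sum_{t<n} N_t$ with $N_t := (\ell^{\,t}_{j_t} - j_t) + r'_t = \sum_{i<j_t}(R^t_i - 2) + r'_t$, which is centered up to $O(1)$ conditionally on $j_t$.

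The bound \eqref{eq:greedy.path.s} is the shorter one. Apply Hoeffding's inequality to the i.i.d.\ summands $X^{(m)} - k\greedyInterval{k} \in [-k,k]$: for $n$ a multiple of $k$ this gives $\P(|s_n - \greedyInterval{k} n| \ge u) \le 2\exp(-cu^2/(kn))$, so $u = t\sqrt n$ yields $2\exp(-ct^2/k)$. For general $n$ the terminal partial block and the gap between $\greedyInterval{k} n$ and $k\greedyInterval{k}\lfloor n/k\rfloor$ shift $s_n - \greedyInterval{k} n$ by at most $O(k)$, which is absorbed into $t\sqrt n$ once $t \ge 5$ (for $n$ bounded in terms of $k$ one makes the bound trivial by enlarging $C$).

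For \eqref{eq:greedy.path.u} I control the two sums in the master identity separately. For the first, $\sum_{t<n} s_t$ equals $\greedyInterval{k} n^2/2$ plus a deterministic $O(n)$ correction plus the fluctuation $\sum_{t<n}(s_t - \greedyInterval{k} t)$; telescoping $s_t - \greedyInterval{k} t$ into the i.i.d.\ block increments and interchanging the order of summation rewrites this fluctuation as a weighted sum $\sum_m a_m (X^{(m)} - k\greedyInterval{k})$ with $|a_m| \le n$ and $\sum_m a_m^2 = O(n^3/k)$, so Hoeffding yields a sub-Gaussian tail of order $\exp(-ct^2/k)$ at the scale $u = tn^{3/2}$---stronger than what is claimed. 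For the second sum, one first proves the a priori bound $\max_{t\le n} r_t \le C_0 n^2$ with overwhelming probability (via a stopping-time bootstrap together with Bernstein-type tail bounds on the width sums), so that all $j_t \le C_0 n^2 + n$ on a good event; on that event $\sum_{t<n} N_t$ is, after revealing the randomness block by block, a sum of $\sum_{t<n} j_t = O(n^3)$ independent centered increments $R^t_i - 2$ (with $i < j_t$), plus the bounded offsets $r'_t$ whose total contributes an $O(n)$ mean and a fluctuation of scale $O(\sqrt n)$, negligible at scale $n^{3/2}$. The increments are $O(1)$-sub-exponential with total variance $\Theta(n^3)$, so Bernstein's inequality gives $\P(|\sum_{t<n} N_t - \E[\sum_{t<n} N_t]| \ge u) \le 2\exp(-c\min(u^2/n^3, u))$; at $u = tn^{3/2}$ this is $2\exp(-c\min(t^2, tn^{3/2})) \le C\exp(-ct^2/(1 + t/\sqrt n))$. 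Combining this with the sub-Gaussian bound on the first sum, the $O(n)$ centering correction, and a union bound over the good event gives \eqref{eq:greedy.path.u}.

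The step I expect to be the main obstacle is justifying the independence used in the Bernstein estimate, together with the a priori control of $\max_{t\le n}r_t$ it relies on. The recursion for $r_{t+1}$ reads the widths $R^t_i$ on diagonals strictly below the path at row $t$, which the path never visits; to treat these as fresh and centered given the past, one argues that the maximal-height selection within a block never consults those widths---it needs only the vertical profiles of the shapes on and above the path, and by the i.i.d.-across-diagonals property the law of what lies ahead does not depend on which diagonal the path currently occupies---and then reveals the randomness block by block accordingly. Pinning down this reveal order, and the accompanying stochastic-domination argument for $\max_{t\le n} r_t$, is the technical heart of the proof, carried out in \cite[Proposition~5.16]{hoffman2024density}; granting it, the two concentration estimates follow from Hoeffding and Bernstein as sketched.
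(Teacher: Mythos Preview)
The paper does not itself prove this proposition; it is quoted verbatim from \cite[Proposition~5.16]{hoffman2024density} and placed in the appendix only so the authors can explain in one sentence why it carries over to comb percolation. That explanation is that the shapes in each step still have independent $\Geo(1/2)$-wide bases, so each row can be sandwiched between two critical Galton--Watson processes with immigration as in \cite[Lemma~5.15]{hoffman2024density}. There is thus no in-paper proof to compare against beyond this pointer.

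Your sketch is a reasonable reconstruction of how such a proof goes. The block decomposition and Hoeffding bound for \eqref{eq:greedy.path.s} are correct and short (note you are tacitly assuming the extended $k$-greedy path is defined by block-wise concatenation of length-$k$ greedy segments; this paper never spells that out, though it is the natural reading and presumably matches \cite{hoffman2024density}). The master identity $r_n=\sum_{t<n}s_t+\sum_{t<n}N_t$ is right, and your weighted-Hoeffding control of $\sum_{t<n}(s_t-\rho^{(k)}t)$ is clean. Where your outline and the cited machinery appear to diverge is in how one gets the a~priori control $\max_{t\le n}r_t=O(n^2)$ and the ensuing concentration of $\sum N_t$: the reference handles the width of the infection set via the branching-process sandwich of Lemma~5.15 rather than a direct stopping-time bootstrap, and that comparison is what feeds both the size bound and the tail estimate for $r_n$.

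Your independence heuristic---that the greedy height selection never reads the widths $R_i^t$ with $i<j_t$, so those remain fresh when $\ell^t_{j_t}$ is formed---is the right idea, and you are right that making it precise is the crux. One subtlety you glossed over: within a block the greedy selection explores a \emph{tree} of candidate cells across the $k$ steps, not just the single diagonal on the eventually-chosen path, so ``diagonals the path visits'' must be replaced by ``diagonals any candidate path could visit'' when you specify which widths are still unrevealed. This does not break the argument (shapes are i.i.d.\ across diagonals, so the ones strictly below the explored region are still fresh), but the reveal order has to be set up with that in mind.
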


This continues to hold with $\greedyInterval{k}$ replaced with $\greedyComb{k}$ because, just like in interval percolation, the shapes used in each step have independent $\Geo(1/2)$ bases. Thus, a given row in comb percolation is still sandwiched between two critical Geometric branching processes with immigration that depends on the height and width of the row as in \cite[Lemma 5.15]{hoffman2024density}.

\begin{prop}[Proposition~5.8]\thlabel{prop:min-odometer-concentration}
 Let $\m'$ be the minimal odometer of $\eosos{n}'(\instr,\sigma,f_0)$, the set of all {odometers} $u$ on $\ii{0,n+1}$ that are stable on $\ii{1,n}$ under $\sigma,\instr$ and whose net flow from site~$0$ to site~$1$ is equal to  $f_0$ (See \cite{hoffman2024density} for more formal definitions).
 
  Let $e_i= -f_0-\sum_{v=1}^i\abs{\sigma(v)}$
  and suppose that $\abs{e_i}\leq\emax$ for some $\emax\geq 1$.
  For some constants $c,C>0$ depending only on $\lambda$, it holds
  for all $t\geq 4\emax$ and $1 \leq j \leq n$ that
\begin{align}\label{eq:min.odometer.bound}
    \P\Biggl(\abs[\bigg]{\rt{\m'}{j}-\sum_{i=1}^j e_i} \geq t\Biggr)
      \leq C\exp\biggl(-\frac{ct^2}{n\bigl(n\emax +t\bigr)}\biggr).
\end{align}
\end{prop}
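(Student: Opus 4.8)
The plan is to exploit the left-to-right recursive structure of the minimal odometer and reduce the statement to a Bernstein-type martingale concentration inequality. Write $R_v := \rt{\m'}{v}$. Specializing the construction of the minimal odometer to the interval (as in \thref{def:m}), $\m'(v)$ is the smallest integer with $\lt{\m'}{v} = R_{v-1} + e_{v-1}$; setting $m_v := R_{v-1}+e_{v-1}$, this says $R_v$ is the signed number of $\Right$ instructions appearing in the shortest window of the stack at $v$ anchored at index $0$ whose $\Left$-count is $m_v$, i.e.\ reading forward if $m_v > 0$ and backward if $m_v < 0$ until $|m_v|$ $\Left$s are seen. Away from the forced instruction at index $0$ the stack entries are i.i.d.\ with $\Left$ and $\Right$ equally likely, so conditionally on $\mathcal F_{v-1}$ (the stacks at sites $<v$) the variable $R_v$ is a sign times a sum of $|m_v|$ independent copies of $G := (\text{number of } \Right\text{s strictly between two consecutive } \Left\text{s})$, which has mean $1$, finite variance, and a geometric tail. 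By the $\Left$–$\Right$ exchange symmetry, $\E[R_v\mid\mathcal F_{v-1}] = m_v$ up to an additive correction of absolute value at most $1$ (from the forced $\Left$ at index $0$ in the backward case). Hence $\xi_v := R_v - m_v$ is, up to bounded corrections, a martingale difference for $(\mathcal F_v)$; unrolling $R_v = R_{v-1} + e_{v-1} + \xi_v$ from the left-boundary value (itself $-f_0 = e_0$ up to $O(1)$) gives
\[
\rt{\m'}{j} - \sum_{i=1}^{j} e_i \;=\; E_j \;+\; M_j, \qquad M_j := \sum_{v=1}^{j}\xi_v,
\]
where $(M_j)$ is a martingale and $|E_j| = O(\emax)$. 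Since $t\ge 4\emax$, the term $E_j$ is absorbed by adjusting constants, so it suffices to bound $\P(|M_j|\ge t/2)$.

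For the martingale estimate I would use an exponential-moment (Bernstein) argument together with a localization to handle the fact that the variance proxy depends on the martingale's own past. Because $\xi_v$ is, conditionally on $\mathcal F_{v-1}$, a centered sum of $|m_v|$ i.i.d.\ copies of a mean-$1$, geometric-tailed variable, the heavy tail contributes only a bounded tilt range while the number of summands inflates only the quadratic term: there are universal constants $\theta_0, C_0>0$ with $\E[e^{\theta\xi_v}\mid\mathcal F_{v-1}] \le \exp(C_0\theta^2|m_v|)$ for all $|\theta|\le\theta_0$. Now $|m_v| = |R_{v-1}+e_{v-1}| \le |R_{v-1}| + \emax$, and unrolling the recursion shows $R_{v-1}$ equals a deterministic term of magnitude at most $n\emax$ plus $M_{v-1}$ plus an $O(\emax)$ boundary correction, so $|m_v| \le n\emax + |M_{v-1}| + O(\emax)$. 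Localizing at $\tau := \inf\{v : |M_v|\ge t\}$, on $\{v\le\tau\}$ one has $|m_v| \lesssim n\emax + t$ and hence $\sum_{v\le j\wedge\tau}|m_v| \lesssim n(n\emax+t)$ using $j\le n$ and $\emax\ge1$. Thus $\E[\exp(\theta M_{j\wedge\tau})] \le \exp(C\theta^2 n(n\emax+t))$ for $|\theta|\le\theta_0$, and choosing $\theta = \min\{\theta_0,\, ct/(n(n\emax+t))\}$ — the unconstrained optimum being $O(1/n) \le \theta_0$ for large $n$, since $n\emax+t\ge n\emax\ge n$ — yields $\P(M_{j\wedge\tau}\ge t/2) \le C\exp(-ct^2/(n(n\emax+t)))$; the same applies to $-M_{j\wedge\tau}$. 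On $\{\tau\le j\}$ we have $|M_{j\wedge\tau}| = |M_\tau| \ge t > t/2$, so the displayed bound also controls $\P(\tau\le j)$, and combining the pieces gives $\P(|M_j|\ge t/2) \le C'\exp(-c't^2/(n(n\emax+t)))$, which is \eqref{eq:min.odometer.bound} after renaming constants.

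The main obstacle is the self-referential nature of the variance proxy — the conditional variance of $\xi_v$ scales with $|m_v|$, which itself involves the martingale's past fluctuations $M_{v-1}$ — and it is resolved by the optional-stopping/localization step above, after which one must also confirm that the resulting tilt parameter stays in the range $[-\theta_0,\theta_0]$ where the per-increment bound is valid (this is where $t\ge 4\emax$ and $j\le n$ enter quantitatively). A secondary, routine, point is the bookkeeping around index $0$ and the sign of $m_v$: verifying that the $\Left$–$\Right$ symmetry yields the conditional mean $m_v$ up to a genuinely bounded (not merely $o(n)$) correction, and that the accumulated deterministic/boundary error $E_j$ is $O(\emax)$, so that it is dominated by the hypothesis $t\ge 4\emax$.
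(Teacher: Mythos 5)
This proposition is not proved in the paper at all: it is imported verbatim from \cite{hoffman2024density} (their Proposition~5.8), and the appendix only argues that the analogous statement transfers to the comb because the minimal odometer depends on the stacks only through $\Left$/$\Right$ counts, which retain the same $\Geo_0(1/2)$ gap structure. Your proposal is therefore doing something the paper does not attempt, namely reproving the interval statement from scratch, and the argument you give is essentially the standard one and looks correct. The key structural points all check out: $m_v=\rt{\m'}{v-1}+e_{v-1}$ is measurable with respect to the stacks at sites $<v$, while $\rt{\m'}{v}$ given $m_v$ is a signed sum of $\abs{m_v}$ i.i.d.\ $\Geo_0(1/2)$ gap counts; in fact, if you count the gaps carefully (in the backward case the minimal window runs to just past the $(\abs{m_v}+1)$-st $\Left$, so it contains exactly $\abs{m_v}$ gaps, the first of which sits between the forced index-$0$ $\Left$ and the next one), the conditional mean is \emph{exactly} $m_v$ with no $\pm1$ correction, so your hedge is unnecessary and the deterministic error really is $E_j=e_0-e_j$ with $\abs{E_j}\le 2\emax$, absorbed by $t\ge 4\emax$. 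The remaining steps --- the sub-exponential per-increment bound $\exp(C_0\theta^2\abs{m_v})$, the stopping time $\tau$ to break the self-reference $\abs{m_v}\le n\emax+\abs{M_{v-1}}+O(\emax)$, the deterministic bound $\sum_{v\le j\wedge\tau}\abs{m_v}\lesssim n(n\emax+t)$, and the check that the optimal tilt is $O(1/n)$ --- are all sound, and the overshoot issue at $\tau$ is handled correctly since $\abs{M_\tau}\ge t$ by definition. This is, to my knowledge, the same decomposition-plus-Freedman-type strategy used in \cite{hoffman2024density}, so you have reconstructed their proof rather than found a new route. The one thing your write-up does not touch, and which is the only content the present paper actually supplies, is the comb version: one must replace $e_i$ by $-f_0-\sum_{v\le i}(\abs{\sigma(v)}+\abs{\sigma(v')})$ and note that $\Up$ and $\Sleep$ instructions on the spine do not disturb the $\Geo_0(1/2)$ law of the $\Right$-counts between consecutive $\Left$s, after which your argument goes through unchanged.
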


This continues to hold for $j$ on the spine of $\comb{n}$ and with $e_i$ replaced by $-f_0 - \sum_{v=1}^i \abs{\sigma(v)} + \abs{\sigma(v')}$. This is because the comb percolation minimal odometer $\m \in \eosos{n}(\instr, \sigma, f_0)$, like $\m'$, only depends on the counts of $\Left$ and $\Right$ instructions and the configuration over each $v \in \ii{1,n}$. Since there are still a $\Geo_0(1/2)$-distributed number of $\Right$ instructions between subsequent $\Left$ instructions, the growth of $\m$ behaves like that of $\m'$ (with the augmented configuration counts). 

\begin{lemma}[Lemma~4.1]\thlabel{lem:flow}
  Let $u$ be an odometer on $\ii{0,n}$.
  Let $f_v=\rt{u}{v}-\lt{u}{v+1}$, the net flow from $v$ to $v+1$.
  Let $s_v=\sum_{i=1}^v\1\{\instr_i(u(i))=\Sleep\}$.
  Then $u$ is stable on $\ii{1,n-1}$ for initial configuration $\sigma$ if and only if
  \begin{align}\label{eq:lemflow}
    f_v = f_0 + \sum_{i=1}^v\abs{\sigma(i)} - s_v\quad\text{for all $v\in\ii{0,n-1}$.}
  \end{align}
\end{lemma}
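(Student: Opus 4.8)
The plan is to prove Lemma~\ref{lem:flow} by a direct translation of the stability conditions (a) and (b) for an odometer on $\ii{1,n-1}$ into a recursion for the net flow, then solving that recursion explicitly. First I would unwind definitions: for $v \in \ii{1,n-1}$ the inflow is $\In{v}{u} = \rt{u}{v-1} + \lt{u}{v+1}$ and the outflow is $\Out{v}{u} = \rt{u}{v} + \lt{u}{v}$, so with $f_v := \rt{u}{v} - \lt{u}{v+1}$ we get $\In{v}{u} - \Out{v}{u} = f_{v-1} - f_v$. Hence the ``height'' left at $v$ is $h(v) = \abs{\sigma(v)} + f_{v-1} - f_v$. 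Conditions (a) and (b) together say exactly that $h(v) = \1\{\instr_v(u(v)) = \Sleep\}$ for every interior $v$, which rearranges to the increment identity $f_v - f_{v-1} = \abs{\sigma(v)} - \1\{\instr_v(u(v)) = \Sleep\}$.

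Next I would sum this telescoping identity from $i = 1$ to $v$. Writing $s_v = \sum_{i=1}^v \1\{\instr_i(u(i)) = \Sleep\}$, summation gives $f_v - f_0 = \sum_{i=1}^v \abs{\sigma(i)} - s_v$, which is precisely \eqref{eq:lemflow}. This establishes the forward implication: if $u$ is stable on $\ii{1,n-1}$ then the increment identity holds for each interior $v$, and summing yields \eqref{eq:lemflow} for all $v \in \ii{0,n-1}$ (the $v=0$ case being vacuous, $f_0 = f_0$). For the converse, suppose \eqref{eq:lemflow} holds for all $v$; subtracting the identity at $v-1$ from the one at $v$ recovers the increment identity $\abs{\sigma(v)} + f_{v-1} - f_v = \1\{\instr_v(u(v)) = \Sleep\}$ for each $v \in \ii{1,n-1}$. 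Since the right side is either $0$ or $1$, we immediately get $h(v) = \abs{\sigma(v)} + \In{v}{u} - \Out{v}{u} \in \{0,1\}$, which is (a), and the equation also says $h(v) = 1$ iff $\instr_v(u(v)) = \Sleep$, which is (b). Thus $u$ is stable on $\ii{1,n-1}$.

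There is really no deep obstacle here; the lemma is a bookkeeping identity, and the only thing to be careful about is the index conventions. The main point requiring attention is that on the interval (as opposed to the comb) there are no $\Up$/$\Down$ instructions and no tooth contributions, so $\In{v}{u}$ and $\Out{v}{u}$ simplify as above — one must make sure the definitions being invoked are the interval versions from \cite{hoffman2024density} rather than the comb versions defined earlier in this paper. A secondary subtlety is the boundary: the statement asks for \eqref{eq:lemflow} to hold for all $v \in \ii{0,n-1}$ while stability is only imposed on $\ii{1,n-1}$, so one should note explicitly that the $v=0$ instance of \eqref{eq:lemflow} is the trivial identity $f_0 = f_0$ and carries no information, and that $\lt{u}{0} = \rt{u}{n} = 0$ by the sink convention is not even needed for the equivalence. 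Beyond that, the argument is the two-line telescoping computation sketched above.
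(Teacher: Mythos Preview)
Your proposal is correct. The paper does not actually prove this lemma; it is quoted in the appendix as a result from \cite{hoffman2024density}, with only a remark on how the statement adapts to the comb. Your telescoping argument---rewriting $\In{v}{u}-\Out{v}{u}=f_{v-1}-f_v$, reading stability as $h(v)=\1\{\instr_v(u(v))=\Sleep\}$, and summing the resulting increment identity---is precisely the natural bookkeeping proof and is what one finds in the cited reference.
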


This lemma still holds true on the comb with one minor modification. Namely for $u$ an odometer on $\comb{n}$, $f_v$ still the net flow from $v$ to $v+1$, $d_{v'}$ the net flow from $v'$ to $v$, and $s_v:=\sum_{i=1}^v\1\{\instr_i(u(i))=\Sleep\}+\1\{\instr_{i'}(u(i'))=\Sleep\}$, $u$ is stable on $\comb{1,n}$, the subsection of the spine $\ii{1,n}$ along with the corresponding teeth, if and only if 
\begin{align}
    f_v = f_0 + \sum_{i=1}^v\abs{\sigma(i)}+\abs{\sigma(i')} - s_v
  \end{align}
  and
  \begin{align}
    d_{v'} = \abs{\sigma(v')}-\1\{\instr_{v'}(u(v'))\}
  \end{align}
for all $v\in\ii{1,n}$.

\begin{lemma}[Lemma~8.3]\thlabel{lem:uniflow}
  Let $u$ be an odometer on $\ii{0,n}$ stable on $\ii{1,n-1}$ with $u(0)\geq 0$
  and $u(n)\geq 0$.
  Let $f_v=\rt{u}{v}-\lt{u}{v+1}$.
  Suppose there exists $k$ such that $f_v\leq 0$ for $0\leq v<k$
  and $f_v\geq 1$ for $k\leq v\leq n$. Then $u(v)\geq 0$ for all $v\in\ii{0,n}$.  
\end{lemma}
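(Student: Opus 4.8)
\textbf{Proof proposal for \thref{lem:uniflow}.}

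The plan is to argue by contradiction: suppose $u(v_0) < 0$ for some $v_0 \in \ii{0,n}$, and derive a contradiction with one of the hypotheses. First I would recall what negative odometer values mean via the count functions $\lt{u}{v}$ and $\rt{u}{v}$: when $u(v) \geq 0$ these are nonnegative, and when $u(v) < 0$ both are at most $0$, being $-$(sums of indicators over negative indices). So the sign of $u(v)$ is detected jointly by the signs of $\lt{u}{v}$ and $\rt{u}{v}$; in fact, since $\instr_v(0) = \Left$, if $u(v) < 0$ then $\lt{u}{v} \leq 0$ and $\rt{u}{v} \leq 0$, and moreover $u(v) \ge 0$ forces $\lt u v \ge 0, \rt u v \ge 0$. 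The key will be to relate the sign of $u(v)$ to the net flows $f_v$ on either side.

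The main structural step is to express the flow balance at an interior site in terms of $\lt{u}{}$ and $\rt{u}{}$. Since $u$ is stable on $\ii{1,n-1}$, the incoming flow equals the outgoing flow up to the $\{0,1\}$-valued height, which gives $f_{v-1} = f_v + (\text{something in }\{0,1\}) - \abs{\sigma(v)}$ — but rather than re-deriving \thref{lem:flow} I would instead directly track how $f_v$ changes relative to $\lt u {}$ and $\rt u {}$. Concretely, I would split into the two regimes. For $k \le v \le n$ we have $f_v \ge 1$, so in particular $\rt u v - \lt u {v+1} \ge 1$, which forces $\rt u v \ge 1$ (hence $u(v) \ge 1 > 0$) or $\lt u {v+1} \le -1$ (hence... we must push the negativity rightward). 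Working from $v = n$ leftward using $u(n) \ge 0$: the terminal condition $u(n) \ge 0$ gives $\lt u n \ge 0$; combined with $f_{n-1} \ge 1$ this should propagate $u(v) \ge 0$ down to $v = k$. Symmetrically, for $0 \le v < k$ we have $f_v \le 0$, and starting from $u(0) \ge 0$ (so $\rt u 0 \ge 0$) we propagate nonnegativity upward to $v = k-1$. The monotone ``staircase'' behavior of $\lt{u}{v}$ and $\rt{u}{v}$ as functions of $u(v)$ — each is nondecreasing in $u(v)$ and changes by at most the number of $\Left$/$\Right$ instructions in the relevant index range — is what makes these propagation arguments go through: a violation $u(v) < 0$ at an interior $v$ would force, via the flow inequality at $v$, a contradiction with the value already pinned down at $v \pm 1$.

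The step I expect to be the main obstacle is handling the boundary between the two regimes cleanly, i.e. ruling out $u(v) < 0$ right around $v = k$, where $f_{k-1} \le 0$ meets $f_k \ge 1$. Here the flow jumps by at least $1$ across site $k$, and I would need to check that the balance equation at $k$ — which says this jump is accounted for by $\abs{\sigma(k)}$ minus a $\Sleep$-indicator, both combining with the instruction counts — is compatible only with $u(k) \ge 0$. The other slightly delicate point is bookkeeping the asymmetry between $\lt u v$ (governed by $\instr_v(0) = \Left$, so the index-$0$ instruction always contributes on the nonnegative side) and $\rt u v$ (no such guarantee), which is exactly why the hypotheses single out $u(0) \ge 0$ and $u(n) \ge 0$ rather than being symmetric in an obvious way. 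I would mirror the corresponding argument in \cite[Lemma~8.3]{hoffman2024density}, since nothing here is comb-specific: \thref{lem:uniflow} is stated purely for odometers on the interval $\ii{0,n}$, so it transfers verbatim.
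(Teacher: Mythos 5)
Your overall plan matches the paper's handling of this statement: the paper does not re-prove it, but simply imports \cite[Lemma~8.3]{hoffman2024density} and appends a one-line remark that it carries over to the spine of $\comb{n}$, together with the observation that for a stable odometer $u(v)\ge 0$ forces $u(v')\ge 0$ on the corresponding tooth. You correctly identify the lemma as interval-only and say you would mirror the original argument, and your sketch of that argument (two-sided propagation of nonnegativity, driven by the sign behaviour of $\lt{u}{\cdot}$ and $\rt{u}{\cdot}$) has the right skeleton.

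Two points in your sketch are off, though neither is fatal. First, the ``boundary obstacle'' you anticipate at $v=k$ does not exist: the forward step ($u(v)\ge 0$ and $f_v\le 0$ give $\lt{u}{v+1}\ge\rt{u}{v}\ge 0$, hence $u(v+1)\ge 0$) covers $\ii{0,k}$ starting from $u(0)\ge 0$, while the backward step ($u(v+1)\ge 0$ and $f_v\ge 1$ give $\rt{u}{v}\ge\lt{u}{v+1}+1\ge 1$, hence $u(v)\ge 1$) covers $\ii{k,n}$ starting from $u(n)\ge 0$; the two inductions never interact and no balance equation at $k$ is needed. Second, your reading of the $\instr_v(0)=\Left$ convention is reversed: the index-$0$ instruction is excluded from the sum when $u(v)\ge 0$ (which runs over $1\le i\le u(v)$) and is \emph{included} in the negative-index sum when $u(v)<0$, forcing $\lt{u}{v}\le -1$ there; it is precisely this that makes $u(v)\ge 0$ equivalent to $\lt{u}{v}\ge 0$ and makes the forward step airtight. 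Relatedly, the hypotheses $u(0)\ge 0$ and $u(n)\ge 0$ are symmetric as stated; the left/right asymmetry lives in the instruction convention, not in which endpoints are singled out. Finally, your ``transfers verbatim'' glosses over the one genuinely comb-specific step the paper does flag: one still needs $u(v)\ge 0 \Rightarrow u(v')\ge 0$ to propagate nonnegativity to the teeth.
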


This lemma still holds on the comb through identification of $\ii{1,n}$ and the spine of $\comb{n}$. Additionally, if $u(v)\ge 0$ then $u(v')\ge 0$ for a stable odometer $u$, and so these conditions ensure that $u$ is nonnegative on all of $\comb{n}$.   

\begin{lemma}[Lemma~2.5]\thlabel{lem:dual.lap}
  Let $u'$ be an odometer on $\ii{a,b}$ that is stable on $\ii{a,b}$, for given initial configuration
  and instruction lists on the interval.
  The true odometer stabilizing $\ii{a,b}$ leaves at least as many
  particles sleeping on $\ii{a,b}$ as does $u'$.
\end{lemma}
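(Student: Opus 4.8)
The plan is to prove the statement in its equivalent ``least-action'' form, working on $\comb{n}$ (the interval being the teeth-free special case). Write $\sigma,\instr$ for the configuration and instructions, $u^*$ for the true odometer, and $1,n$ for the spine vertices adjacent to the sinks. First I would record the conservation identity that holds for \emph{any} stable odometer $u$:
\[
  \#\{\text{particles left sleeping under }u\}\;=\;\abs{\sigma}-\lt{u}{1}-\rt{u}{n}.
\]
It follows by summing $h(v)$ over $v\in\combinterior{n}$ --- which equals the number of sleeping particles by condition~(b), since $h(v)\in\{0,1\}$ by condition~(a) --- and telescoping: every jump instruction into a non-sink vertex contributes once to an inflow term and once to an outflow term and cancels, the $\Up$ and $\Down$ contributions cancel, and all that survives is $-\lt{u}{1}-\rt{u}{n}$, the tally of jumps into the two sinks. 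This is just \thref{lem:flow} in its comb form read at both endpoints. Consequently it suffices to show that $u^*$ \emph{simultaneously minimizes} $\lt{\cdot}{1}$ and $\rt{\cdot}{n}$ over all stable odometers; by the left--right symmetry of the setup it is enough to show that no stable odometer discharges fewer particles to the left sink than the true dynamics does.

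For \emph{nonnegative} stable $u'$ this is the classical least-action principle: $u^*\le u'$ pointwise, proved by the standard ``lazy'' stabilization of $\sigma$ that topples a site only while its running odometer stays strictly below $u'$ --- stability of $u'$ (conditions (a) and (b)) is precisely what keeps this procedure from halting at a non-stable configuration, and the abelian property identifies its output as $u^*$. Since $\lt{\cdot}{1}$ is non-decreasing in $u(1)$, we get $\lt{u^*}{1}\le\lt{u'}{1}$. This already covers the way the lemma is invoked in the body of the paper, where $u'$ is nonnegative.

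The remaining, and main, difficulty is to rule out stable odometers that take negative values somewhere --- morally those that ``pull particles in'' from a sink --- and still discharge strictly fewer particles to the left sink than $u^*$. Here I would argue through the minimal odometer of \thref{def:m}: if $\lt{u}{1}<\lt{u^*}{1}$ for some stable $u$, one may as well take $u=\m(f_0)$ with $f_0=-\lt{u}{1}>f_0^*:=-\lt{u^*}{1}$; an easy induction using the recursion in \thref{def:m} shows $\m(f_0)$ is pointwise non-increasing in $f_0$, so $\m(f_0)\le\m(f_0^*)\le u^*$, and it then remains to check that at the extremal flow $f_0^*$ the minimal stable odometer has no room to go negative --- which would force $\m(f_0^*)=u^*$ by the least-action principle above and show that $f_0>f_0^*$ is not attainable by any stable odometer. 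This last point --- extremality of $f_0^*$ and the identity $\m(f_0^*)=u^*$ --- is where the real work lies; everything else is bookkeeping with the balance equations. Finally, nothing above is disturbed by the teeth, so the conclusion transfers verbatim to $\comb{n}$: the conservation identity and the monotonicity of $\m(f_0)$ are insensitive to them, and a stable comb odometer automatically satisfies $u(v')\ge 0$ wherever $u(v)\ge 0$ (as observed after \thref{lem:uniflow}), so the sleeping particles on the teeth are counted along with those on the spine.
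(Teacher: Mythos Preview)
For nonnegative stable odometers your argument is exactly the paper's: sum $h(v)$ over non-sink sites to obtain the number of sleeping particles, telescope to $|\sigma|$ minus the total flow into the sinks, then use the pointwise least-action inequality $u^*\le u'$ to conclude that the true odometer sends no more to the sinks than $u'$ does. This is all the paper offers, and all that is needed---in Section~\ref{sec:cp} the lemma is invoked only after $u'$ has been shown to be nonnegative.

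Your attempted extension to stable odometers with negative values is both unnecessary and impossible: the statement is false in that generality. On the interval $\ii{0,2}$ with empty initial configuration $\sigma(1)=0$ and $\instr_1(-1)=\Sleep$, the odometer $u'(1)=-1$ is stable (one computes $\lt{u'}{1}=-1$, $\rt{u'}{1}=0$, hence $h(1)=0-(-1)=1$ with last instruction $\Sleep$), and it leaves one particle sleeping while the true odometer $u^*\equiv 0$ leaves none. The flow to the sinks under $u'$ is $-1<0$, so the least-action comparison points the wrong way. Consequently the step you flag as ``where the real work lies''---extremality of $f_0^*$ among flows realised by stable odometers---cannot be completed; stable odometers with $f_0>f_0^*$ genuinely exist, and your monotonicity chain $\m(f_0)\le\m(f_0^*)\le u^*$ leads to no contradiction. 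Simply drop the negative case and read the lemma as a statement about nonnegative $u'$.
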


The analog of this lemma still holds on the comb. The true odometer stabilizing $\comb{a,b}$ leaves at least as many particles on $\comb{a,b}$ as any stable odometer $u'$. This holds on all finite connected graphs with sinks by summing $h(v)$ (defined in Section \ref{sec:generalstableodometers}) over all sites, which telescopes to leave only the flow into sinks, which must be greater than the true stabilizing odometer by the least-action principle. 

\begin{proof}[Proof sketch of \thref{prop:surj}]
  The proof is similar to that of \cite[Proposition 4.6]{hoffman2024density}. Let $((r_v, s_v)_v, 0\le v\le n)\in \mathcal I (\instr, \sigma, f_0)$ be an infection path. We will construct $u$ which is the preimage of this infection path inductively. First, define $u(0)=0$, and assume that $u(v-1)$ has been defined. We let $i_v$ be the minimal integer such that, beginning after instruction $\m(v)$, $r_{v-1}+s_{v-1}$ $\Left$ instructions and $r_v$ $\Right$ instructions have occurred in the spine instruction stack at $v$. Such an $i_v$ exists because of our coupling of instruction stacks with comb percolation, as we know $(r_{v-1}, s_{v-1})_{v-1}$ infects $(r_v, s_v)_v$. Similarly let $i_{v'}$ be the minimal integer such that, beginning after instruction $\m(v')$, $\up{v}{i_v}-\up{v}{\m}$ $\Down$ instructions have occurred in the tooth instruction stack at $v'$. 
  
  We now break into cases: $s_{v}-s_{v-1}=0$, $s_v-s_{v-1}=1$, and $s_v-s_{v-2}=2$. If $s_{v}-s_{v-1}=0$, we define $u(v)=i_v$ and $u(v')=i_{v'}$. If $s_{v}-s_{v-1}=1$ and there exists a $\Sleep$ instruction in the spine stack after $i_v$ but before the next $\Right$ or $\Left$ instruction, with some minimal index $j_v$, we let $u(v)=j_v$, $u(v')=j_{v'}$, where $j_{v'}$ is the minimal index after $i_{v'}$ for which the number of $\Down$ instructions between $i_{v'}$ and $j_{v'}$ matches the number of $\Up$ instructions executed between $i_v$ and $j_v$ in the spine stack. If no such index exists, by our coupling we know that by pairing $\Up$ instructions (including the $0^\text{th}$ instruction with $i_{v'}$) within the spine stack between $i_v$ and the next $\Right$ or $\Left$ instruction with $\Down$ instructions within the tooth stack after $i_{v'}$, there exists some minimal index $\ell_{v'}$ corresponding to a paired $\Down$ instruction such that directly before $\ell_{v'}$ there is a $\Sleep$ instruction. Letting $k_v$ be the index of the paired $\Up$ instruction in the spine stack, we define $u(v)=k_v$ and $u(v')=\ell_{v'}-1$.

  Finally, if $s_{v}-s_{v-1}=2$, again by the coupling of infection paths and instruction stacks, we know that there exists some minimal index $k_v$ (potentially $i_v$) between the indices $i_v$ and the index of the next $\Right$ or $\Left$ instruction such that: $k_v$ indexes an $\Up$ instruction (if $k_v\neq i_v$), the paired $\Down$ instruction in the tooth stack with index $\ell_{v'}$ has a $\Sleep$ instruction directly proceeding it, and $k_v+1$ indexes a $\Sleep$ instruction. We then define $u(v)=k_v+1$, $u(v')=\ell_{v'}-1$. 
  It can be checked that such a $u$ is stable and $\Phi(u)$ is exactly the infection path $((r_v, s_v)_v, 0\le v\le n)$.   
\end{proof}

\bibliographystyle{alpha}
\bibliography{references}
\end{document}